\title{Equating two maximum degrees}
\date{}
\begin{document}
\newtheorem{theorem}{Theorem}[section]
\newtheorem{definition}{Definition}[section]
\newtheorem{proposition}[theorem]{Proposition}
\newtheorem{corollary}[theorem]{Corollary}
\newtheorem{lemma}[theorem]{Lemma}
\newtheorem{conjecture}[theorem]{Conjecture}
\newcommand{\diff}{\mbox{\rm diff}}
%\newtheorem{Ex}{$\rhd$ Example}[section]
%\linenumbers
\DeclareGraphicsExtensions{.pdf,.png,.jpg}

\author{Yair Caro \\ Department of Mathematics\\ University of Haifa-Oranim \\ Israel \and Josef  Lauri\\ Department of Mathematics \\ University of Malta
\\ Malta \and Christina Zarb \\Department of Mathematics \\University of Malta \\Malta }

\maketitle

\begin{abstract}

Given a graph $G$, we would like to find (if it exists)  the largest induced subgraph $H$ in which there are at least $k$ vertices realizing the maximum degree of $H$.  This problem was first posed by Caro and Yuster. They proved, for example, that  for every graph $G$ on $n$ vertices we can guarantee, for $k = 2$,  such an induced subgraph $H$  by deleting at most $2\sqrt{n}$ vertices, but the question if $2\sqrt{n}$  is best  possible remains open.
 
Among the results obtained in this paper we prove that:
 \begin{enumerate}
\item{For every graph $G$ on $n \geq 4$  vertices we can delete at most  $\lceil \frac{- 3 + \sqrt{ 8n- 15}}{2 } \rceil$  vertices  to get an induced subgraph $H$ with at least two vertices realizing $\Delta(H)$, and this bound is sharp, solving the problems left open by Caro and Yuster.}
 \item{For every graph $G$ with maximum  degree $\Delta \geq 1$ we can delete at most $\lceil \frac{ -3 + \sqrt{8\Delta +1}}{2 } \rceil$  vertices   to get an induced subgraph $H$ with at least two vertices realizing $\Delta(H)$, and this bound is sharp.}
\item{Every graph $G$ with $\Delta(G) \leq 2$  and least $2k  - 1$ vertices (respectively  $2k - 2$  vertices if k is even) contains an induced subgraph $H$ in which at least $k$ vertices realise $\Delta(H)$,  and these bound are sharp.}
\end{enumerate}
 
\end{abstract}

\section{Introduction}

A well-known elementary exercise in graph theory states that every (simple) graph on at least two vertices has two vertices with the same degree.
Motivated by this fact, Caro and West \cite{caro2009repetition} formally defined the repetition number of a graph $G$, $rep(G)$, to be the maximum multiplicity in the list (degree sequence) of the vertex degrees.

Various research was done concerning the repetition number or repetitions in the degree sequence. Here we mention some of these directions.
\begin{enumerate}
\item{The connection  between the independence number and  $K_r$-free graphs with given repetition number \cite{bollobas1996degree,bollobas1997independent,erdHos1995degree}.}
\item{Hypergraph irregularity - the existence of  $r$-uniform hypergraphs ($r \geq 3$)  with no repeated degrees \cite{balister2016random,gyarfas1992irregularity}.}
\item{Ramsey type problems with repeated degrees  \cite{albertson1991ramsey} and \cite{erdos1993ramsey}. }
\item{Regular independent sets  --- vertices of the same degree forming an independent set \cite{albertson1994lower,albertson79con,caro2016regular}.}
\item{ Forcing $k$-repetition anywhere in the degree sequence \cite{caro2014forcing}}
\item{ Forcing $k$-repetition of the maximum degree \cite{caro2010large}.}
 \end{enumerate}

In this paper we shall focus on the following problem first stated in \cite{caro2010large}.   For a graph $G$ and an integer $k \geq 2$  let $f_k(G)$  denote the minimum number of vertices we have to delete from $G$ in order to get an induced subgraph $H$ in which there are at least $k$ vertices that attain the maximum degree $\Delta(H)$, of $H$,  or otherwise $|H| < k$, where  as  usual,  following the notation of  \cite{westintro},  $|G| = n$  is the number of vertices of $G$, $\Delta(G)$ is the maximum degree of $G$ and a vertex of degree $t$ is called a $t$-vertex.   In the case $ k = 2$  we use the abbreviation $f(G)$ instead of $f_2(G)$.    We define $f(n,k) = \max \{ f_k(G): |G| \leq n  \}$  and $g(\Delta,k) = \max \{ f_k(G) :  \Delta(G) \leq \Delta \}$.

Clearly there are graphs in which we cannot equate $k$ degrees let alone $k$ maximum degrees. A simple example is the star $K_{1,k-1}$  for $k \geq 3$ having $k$ vertices and by definition $f_ j(K_{1,k-1}) =  1$ for $j \geq2$.
 
However, it is trivial  that in every graph $G$ on at least $R(k,k)$ vertices (where $R(k,k)$ is the diagonal Ramsey number), we can equate $k$ maximum degrees.   We call a graph $G$ in which  (by deleting vertices) we can equate $k$ maximum degrees a \emph{k-feasible} graph.  So  of interest is the following function \[h(\Delta,k)  = \max \{ |G| : \Delta(G) \leq \Delta \mbox{ and $G$  is not $k$-feasible}\}.\]

Caro and Yuster \cite{caro2010large} conjectured  that  for every $k \geq 2$  there exists a constant  $c(k)$ such that  $f(n,k) \leq c(k) \sqrt{n}$ and proved the conjecture  for $k = 2$ with  $c(2) =  2$ and $k =  3$ with $c(3) = 43$.  For $k \geq 4$ the conjecture is still open.  The question whether  $c(2) = 2$  and $c(3)=43$  are best possible also remains  open.

Our main purpose in this paper is to show:
 \begin{enumerate}
\item{$f(G)$  can be computed exactly in polynomial time $O(n^2)$.}
 \item{for  $\Delta \geq 1$,   $g(\Delta,2)) \leq  \lceil ( \frac{- 3 +  \sqrt{8 \Delta+1}}{2} \rceil$ and this bound  is sharp.}
\item{for $n \geq 4$, $f(n,2)  \leq   \lceil ( \frac{- 3 +  \sqrt{8n-15}}{2} \rceil$ and this bound  is sharp. Hence in particular $f(2) =   \sqrt{2}$,  solving the problem left open in \cite{caro2010large}.}
\item{for a forest $F$  on $n$ vertices, $f_k(F) \leq (2k-1)n^{\frac{1}{3}}$.}
\item{$g(1,k) = \lfloor \frac{ k-1}{2} \rfloor$,  $g(2,k) = k-1$, thus determining exactly $g(\Delta,k)$  for $\Delta = 1,2$.}
\item{$h(0,k) = k-1$,  $h(1,k) =  \lfloor \frac{ k}{2} \rfloor + 2 \lfloor \frac{k-1}{2} \rfloor$,  $h(2,k) = 2k-2$  for odd $ k \geq 3$,  $h(2,k) = 2k-3$ for even $k \geq 2$.}
\end{enumerate}
 
 The paper is organized as follows :
 
In section 2  we cover the complexity issue of computing $f(G)$,  as well as the sharp upper-bounds for $g(\Delta,2)$ and $f(n,2)$.  In section 3  we consider upper-bounds for $f(F)$  and $f_k(F)$  where $F$ is a forest.  In  section 4 we prove exact results about $g(\Delta,k)$ and $h(\Delta,k)$  for $\Delta= 0,1,2$.  Finally, in section 5  we shall collect open problems and conjectures that deserve further exploration.

\section{Determination of exact upper bounds for $f(G)$ in terms of $\Delta(G)$ and $|G| = n$.  }

We first need  a definition and two lemmas:
 
We call  $B  \subset V(G)$, a set of vertices in a graph $G$,  a \emph{2-equating set}  if in the induced subgraph $H$ on $V(G) \backslash   B$, there are at least two vertices that realise $\Delta(H)$.  We say that $B$ is a 2-equating set  which realises $f(G)$ if $B$ has the  minimum cardinality among all 2-equating sets of $G$.

Let the degree sequence of the graph $G$ on $n$ vertices be $\Delta= d_1 \geq d_2 \geq d_3 \geq \ldots \geq d_n= \delta$ so that $\Delta$ is the maximum degree and $\delta$ the minimum degree.  We define $\diff(G)=d_1-d_2$.
 
\begin{lemma} \label{lemma0}
Let $G$ be a graph on $n  \geq 2$ vertices  with  degree sequence $d_1 \geq d_2 \geq \ldots \geq d_n$,  with $\deg(v) = d_1$ and $\deg(u) = d_2$.   Then $f(G)  \leq  d_1 - d_2=\diff(G)$.
 \end{lemma}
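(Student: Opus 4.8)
The plan is to exhibit an explicit $2$-equating set $B$ of size exactly $\diff(G) = d_1 - d_2$, keeping $u$ and $v$ themselves as the two vertices that will realise the maximum degree of the resulting induced subgraph. The idea is to shave $v$'s degree down from $d_1$ to $d_2$ by deleting $D := d_1 - d_2$ of its neighbours, while being careful to touch neither $u$ nor any neighbour of $u$, so that $\deg(u)$ is left untouched at $d_2$. If $d_1 = d_2$ then $D = 0$ and $u,v$ already realise $\Delta(G)$, so I may assume $d_1 > d_2$, in which case $v$ is the unique vertex of degree $d_1$ and every other vertex has degree at most $d_2$.

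First I would check that $v$ has enough ``safe'' neighbours to delete, that is, that $|N(v) \setminus (N(u) \cup \{u\})| \ge D$. This is the crux of the argument. Writing $S = N(u) \cup \{u\}$, which has $d_2 + 1$ elements, the point is that $v \notin N(v)$ forces $N(v) \cap S \subseteq S \setminus \{v\}$; combined with the observation that $u \notin N(v)$ whenever $u \not\sim v$, this yields $|N(v) \cap S| \le d_2$ in both the adjacent and the non-adjacent case, and hence at least $d_1 - d_2 = D$ neighbours of $v$ lie outside $S$.

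Then I would let $B$ be any $D$ of these safe neighbours and set $H = G - B$. By construction $u,v \notin B$ and no neighbour of $u$ was deleted, so $\deg_H(u) = d_2$, while $\deg_H(v) = d_1 - |B| = d_2$ since all $D$ deleted vertices were neighbours of $v$. Finally, deleting vertices can only decrease degrees, so every vertex of $H$ has degree at most its degree in $G$, which is at most $d_2$ for every vertex other than $v$. Hence $\Delta(H) = d_2$ is realised by both $u$ and $v$, giving $f(G) \le |B| = D = \diff(G)$.

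The main obstacle, and the only place where the inequality is tight rather than off by one, is this counting step: the naive estimate $|N(v) \cap S| \le |N(u)| + 1 = d_2 + 1$ is one too large, and would only secure $D-1$ safe neighbours. The saving observation is that when $u \sim v$ the common-neighbour count drops by one, because $v$ itself lies in $N(u)$ but not in $N(v)$, whereas when $u \not\sim v$ the vertex $u$ contributes nothing to $N(v) \cap S$; either way the bound improves to $d_2$, which is exactly what is needed to obtain $D$ deletions.
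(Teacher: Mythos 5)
Your proof is correct and takes essentially the same approach as the paper's: delete $\diff(G)$ neighbours of $v$ lying outside $N(u)\cup\{u\}$, so that $u$ and $v$ both end up with degree $d_2$ and realise the maximum degree of the resulting induced subgraph. The only difference is that you spell out the counting step $|N(v)\cap(N(u)\cup\{u\})|\le d_2$ (covering both the adjacent and non-adjacent cases), which the paper's proof merely asserts by claiming such a set $B$ exists.
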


\begin{proof}
 
If $d_1 = d_2$ then clearly $f(G) = 0$.  So let  $ \diff(G)=d_1 -d _2   \geq 1$. But then there is at least one set $B$ of neighbours of $v$ of size $\diff(G)$, none of which are adjacent to $u$, and clearly  $f(G) \leq |B| = \diff(G)$.

%$| N(v)\backslash N(u)| = t$  and we can delete these $t$ vertices to equate two maximum degrees as required.
\end{proof}

\begin{lemma}  \label{lemma_1}
 
Let $G$ be a graph on $n \geq 2$ vertices,  with degree sequence $d_1 \geq d_2 \geq \ldots \geq d_n$, with $\deg(v) = d_1$.  
Then either   $f(G) = \diff(G)$,  or  $v$  must be in every minimal 2-equating set of $G$.
\end{lemma}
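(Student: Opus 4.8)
The plan is to treat this as a dichotomy anchored by Lemma~\ref{lemma0}, which already gives $f(G)\le\diff(G)$. First I would dispose of the degenerate case $\diff(G)=0$: here Lemma~\ref{lemma0} forces $f(G)=0=\diff(G)$, so the first alternative holds outright and there is nothing more to check. Thus I may assume $\diff(G)=d_1-d_2\ge 1$, and the key structural consequence I want to record is that in this regime $v$ is the \emph{unique} vertex of degree $d_1$, so that every other vertex of $G$ has degree at most $d_2<d_1$.

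The heart of the argument is to show that if the first alternative fails then $v$ lies in every 2-equating set realising $f(G)$. Since Lemma~\ref{lemma0} makes $f(G)\le\diff(G)$ automatic, ``the first alternative fails'' means precisely $f(G)<\diff(G)=d_1-d_2$. So I would fix an arbitrary minimum-cardinality 2-equating set $B$, giving $|B|=f(G)<d_1-d_2$, and argue by contradiction that $v\notin B$ is impossible.

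The contradiction comes from a degree-drop estimate, which I expect to be the one genuinely load-bearing step. If $v\notin B$ then $v$ survives in $H=G\setminus B$, and since at most $|B|$ of its neighbours are deleted, $\deg_H(v)\ge d_1-|B|>d_1-(d_1-d_2)=d_2$. On the other hand, every other surviving vertex $u$ satisfies $\deg_H(u)\le\deg_G(u)\le d_2$, because $H$ is an induced subgraph (deleting vertices cannot raise a degree) and because all vertices other than $v$ already have degree at most $d_2$ in $G$. Hence $\deg_H(v)>\deg_H(u)$ for every $u\ne v$ in $H$, so $v$ is the unique vertex realising $\Delta(H)$ --- contradicting that $B$ is a 2-equating set. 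Therefore $v\in B$, and as $B$ was an arbitrary set realising $f(G)$, the second alternative holds.

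The only subtlety to keep an eye on is the quantifier: the estimate $|B|<d_1-d_2$ is exactly what keeps $\deg_H(v)$ strictly above $d_2$, so the whole dichotomy hinges on comparing the budget $f(G)$ against $\diff(G)$ rather than against any larger quantity. I would make sure the write-up states this comparison explicitly, since it is precisely what lets the upper bound of Lemma~\ref{lemma0} interact with the minimality of $B$; this is also the step that would break down if one tried to run the same argument for an inclusion-minimal set whose size exceeds $f(G)$, so I would be careful to phrase the conclusion in terms of the sets realising $f(G)$.
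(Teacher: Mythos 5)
Your proof is correct and follows essentially the same approach as the paper: assume $f(G)<\diff(G)$, take a minimum 2-equating set $B$ with $v\notin B$, and derive a contradiction by degree counting, using that $v$ is the unique vertex of degree $d_1$ and all other vertices have degree at most $d_2$. The only difference is cosmetic: the paper splits into two cases according to whether $v$ attains $\Delta(H)$, while you merge both into the single estimate $\deg_H(v)\ge d_1-|B|>d_2\ge\deg_H(u)$, which is a slightly cleaner packaging of the same argument.
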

 
\begin{proof} 
Suppose $f(G)$  is not realised by $\diff(G)$ ( which includes the case $d_1 = d_2$). Then $f(G)  < \diff(G)$, and we may assume that $v$ is the unique vertex in $G$ of degree $d_1$.
 
Let $f(G)$ be realised by some induced subgraph $H$,  and $B =  V(G) - V(H)$  is a minimum 2-equating set for $G$. Assume for the contrary that $v \in H$ (then  $v$ is not a member of $B$).
 
Since $v \in H$  but $f(G)$ is not realised by $\diff(G)$  then either $v$ is not of maximum degree in $H$  in which case at least  $\diff(G) +1$ vertices among the neighbours of $v$ must be deleted  contradicting  $|B| = f(G)  < \diff(G)$, or  $v$ is of maximum degree in $H$ and still at least $\diff(G)$  among its neighbours must be deleted and again  $f(G) =   |B| \geq \diff(G)$ contradicting $f(G) < \diff(G)$. 
\end{proof}

\begin{lemma} \label{lemma_2}
 Let $G$ be a graph on at least $n \geq 2$ vertices.  Suppose that  $f(G) \not =  \diff(G)$,  then  $f(G) = 1 + f(G -v)$, where  $v$ is the single vertex of maximum degree in $G$.
 \end{lemma}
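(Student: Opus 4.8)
The plan is to establish the claimed identity by proving the two inequalities $f(G) \geq 1 + f(G-v)$ and $f(G) \leq 1 + f(G-v)$ separately, with the first relying crucially on Lemma \ref{lemma_1}. Throughout I would keep in mind the hypothesis $f(G) \neq \diff(G)$, since this is precisely what activates the dichotomy in Lemma \ref{lemma_1}: because $f(G) \neq \diff(G)$, that lemma forces $v$ to lie in every minimal $2$-equating set of $G$, and (as noted in its proof) $v$ is the unique vertex of maximum degree $d_1$. This single structural fact is what makes the recursion possible.

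For the inequality $f(G) \geq 1 + f(G-v)$, I would take a minimum $2$-equating set $B$ of $G$, so $|B| = f(G)$. Since $B$ is minimum it is in particular minimal, and since $f(G) \neq \diff(G)$, Lemma \ref{lemma_1} guarantees $v \in B$. Setting $B' = B \setminus \{v\}$, the key observation is that the induced subgraph obtained depends only on the retained vertex set and not on the order of deletion, so that $G - B = (G - v) - B'$. Hence the subgraph realising $f(G)$ is exactly the subgraph of $G-v$ obtained by deleting $B'$, which therefore has at least two vertices realising its maximum degree. Thus $B'$ is a $2$-equating set of $G - v$, giving $f(G-v) \leq |B'| = f(G) - 1$.

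For the reverse inequality $f(G) \leq 1 + f(G-v)$, I would run the same observation in the other direction: let $B'$ be a minimum $2$-equating set of $G - v$, so $|B'| = f(G-v)$. Then $\{v\} \cup B'$ is a set of vertices of $G$ whose deletion yields $(G-v) - B'$, which has at least two vertices of maximum degree; hence $\{v\} \cup B'$ is a $2$-equating set of $G$ of cardinality $1 + f(G-v)$, and $f(G) \leq 1 + f(G-v)$. Combining the two inequalities yields $f(G) = 1 + f(G-v)$.

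I expect the reverse inequality to be essentially immediate, being only the remark that removing $v$ and then $B'$ from $G$ produces the same induced subgraph as removing $B'$ from $G-v$. The main obstacle, and the only genuinely nontrivial step, is the forward inequality, which is not true for an arbitrary vertex and works here solely because Lemma \ref{lemma_1} certifies that the maximum-degree vertex $v$ must belong to the optimal deletion set; without that certification one could not peel off $v$ and retain optimality. I would also take a moment to confirm the routine edge cases — that a minimum $2$-equating set exists and that $G-v$ still has at least two vertices so the recursion is well posed — but these should not cause difficulty.
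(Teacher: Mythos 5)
Your proposal is correct and follows essentially the same route as the paper's own proof: both directions are established exactly as the paper does, adding $v$ to a minimum $2$-equating set of $G-v$ for the upper bound, and invoking Lemma \ref{lemma_1} to extract $v$ from a minimum $2$-equating set of $G$ for the lower bound. The only difference is presentational — you make explicit the (correct) observation that deletion order is irrelevant, which the paper leaves as ``clearly.''
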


\begin{proof}
 
Since $f(G) \not =  \diff(G)$  it follows  that  there is a single  vertex  $v$ of maximum degree in $G$  and also from Lemma \ref {lemma_1} we infer that $v$ must be in any minimal 2-equating set of $G$.
 
Let  $G_1 = G \backslash \{ v\}$  and let $B$ be a minimal 2-equating set for $G_1$, namely  $f(G_1)  = |B|$. Then clearly  $B \cup  \{ v\}$  is a 2-equating set for $G$ hence $f(G) \leq 1+ f(G_1)$.
 
On the other hand let $B$ be a minimum 2-equating set for $G$. Then by assumption and Lemma \ref{lemma_1} $v  \in   B$.  Set $B_1 =  B \backslash \{ v \}$.  Clearly  $B_1$ is a 2-equating set of $G_1$ hence $f(G_1) \leq |B_1| = |B| - 1 =  f(G) - 1$ which gives  $f(G_1) +1 \leq f(G)$.
 
Hence combining  both inequalities    we get $f(G) = f(G \backslash \{ v\} )  +1$.   
 \end{proof}
 
\begin{theorem} \label{theorem_1}
Let $G$ be a graph on $n \geq 2$ vertices,  then  \[f(G)= \min\{\diff(G_j)+j: j=0 \ldots n-2 \},\]where  $G_ { j+1}$  is  obtained from  $G_ j$ by deleting  the vertex $v_{1,j}$ of the maximum degree $d_{1,j}$ from $G_ j$ (where $G_0$ is taken to be $G$), and $d_{2,j}$ is the second largest degree in $G_ j$.

Moreover $f(G)$  can be determined in time $O(n^2)$.
\end{theorem}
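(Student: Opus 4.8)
The plan is to establish the identity by proving the two inequalities $f(G)\le R(G)$ and $f(G)\ge R(G)$ separately, where for brevity I write $R(G)=\min\{\diff(G_j)+j : j=0,\ldots,n-2\}$ for the right-hand side. The structural fact that drives everything is that $R$ obeys the recursion $R(G)=\min\{\diff(G),\,1+R(G-v)\}$, where $v=v_{1,0}$ is a maximum-degree vertex of $G$. This holds because, for $j\ge 1$, the graph $G_j$ is exactly $(G-v)_{j-1}$, so reindexing the terms with $j\ge1$ turns $\min_{j\ge1}(\diff(G_j)+j)$ into $1+R(G-v)$, while the single $j=0$ term contributes $\diff(G)$. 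This recursion is precisely the numerical shadow of the dichotomy already isolated in Lemmas~\ref{lemma_1} and~\ref{lemma_2}, which is what makes the argument go through.

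For the upper bound $f(G)\le R(G)$ I would argue directly, with no induction. Fix any $j$ and set $S=\{v_{1,0},\ldots,v_{1,j-1}\}$, so that $G_j=G-S$. If $B$ is a minimum $2$-equating set of $G_j$, then $S\cup B$ is a $2$-equating set of $G$, since the subgraph of $G$ induced on $V(G)\setminus(S\cup B)$ is identical to the subgraph of $G_j$ induced on $V(G_j)\setminus B$, which already has two vertices realising its maximum degree. Hence $f(G)\le j+f(G_j)$, and Lemma~\ref{lemma0} gives $f(G_j)\le\diff(G_j)$, so $f(G)\le j+\diff(G_j)$. Minimising over $j$ yields $f(G)\le R(G)$.

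For the reverse inequality $f(G)\ge R(G)$ I would induct on $n$. The base case $n=2$ is immediate: the two vertices have equal degree, so $f(G)=R(G)=0$. In the inductive step I split on whether $f(G)=\diff(G)$. If it does, then $f(G)=\diff(G_0)+0$ is one of the terms defining $R(G)$, so $f(G)\ge R(G)$. If it does not, then by Lemma~\ref{lemma_1} there is a unique maximum-degree vertex $v$, and Lemma~\ref{lemma_2} gives $f(G)=1+f(G-v)$; applying the induction hypothesis to $G-v$ (which has $n-1\ge 2$ vertices) gives $f(G-v)=R(G-v)$, and then the recursion yields $f(G)=1+R(G-v)\ge\min\{\diff(G),1+R(G-v)\}=R(G)$. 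Combining both inequalities gives $f(G)=R(G)$.

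For the running time I would maintain the degree list of the current graph $G_j$ explicitly. At each stage, locating $d_{1,j}$ and $d_{2,j}$ (hence $\diff(G_j)$) and the vertex $v_{1,j}$ costs $O(n)$ by a single scan, and deleting $v_{1,j}$ while decrementing the degrees of its neighbours costs $O(\deg v_{1,j})$; summed over the $O(n)$ stages this deletion work is at most $O(m)=O(n^2)$, and the scanning work is $O(n^2)$. Taking the minimum of the $n-1$ recorded values $\diff(G_j)+j$ is then $O(n)$, for a total of $O(n^2)$. The only genuinely delicate point is the bookkeeping in the inductive step: one must check carefully that $(G-v)_{j-1}=G_j$, i.e.\ that deleting $v$ and then repeatedly stripping off maximum-degree vertices produces the same sequence as stripping from $G$ itself, so that the induction hypothesis on $G-v$ plugs cleanly into the recursion for $R$. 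Once this reindexing is verified, both the formula and the complexity bound follow.
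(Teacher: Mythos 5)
Your proof is correct and follows essentially the same route as the paper: both rest on the dichotomy supplied by Lemmas~\ref{lemma_1} and~\ref{lemma_2} (either $f(G)=\diff(G)$ or $f(G)=1+f(G-v)$), which the paper unrolls iteratively and you formalize as an induction on $n$ for the lower bound plus a direct application of Lemma~\ref{lemma0} and the monotonicity $f(G)\le j+f(G_j)$ for the upper bound. Your two-inequality organization and the explicit check that $G_j=(G-v)_{j-1}$ merely make rigorous the steps the paper treats informally, and the $O(n^2)$ running-time analysis is the same.
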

 
\begin{proof}
 
By Lemma \ref{lemma_1},  either $f(G)  = \diff(G)$  or   $v_{1,0}$  must be deleted to obtain $G_1$ and in this case by Lemma \ref{lemma_2},  $f(G) = f(G_1) +1$.
 
Hence $f(G) =  \min\{ \diff(G), f(G_1) +1\}$.  
 
Now  again either $f(G_1) =  \diff(G_1)$,  or  by Lemma \ref{lemma_1} and Lemma \ref{lemma_2} the maximum degree in $G_1$ must be deleted to obtain $G_2$ and then  $f(G_1) = f(G_2) +1$.
 
Hence  $f(G) = \min\{\diff(G),  \diff(G_1) +1, f(G_2 )+2 \}$.   
 
We continue this process  until  for some first $j$,  $\diff(G_j) = 0$   and there we stop having two vertices realizing the maximum degree of $G_ j$ ( the later steps will always give a larger value then $\diff(G_j) +j =  j $).
 
Each step is forced by Lemma \ref{lemma_1} and Lemma \ref{lemma_2} , hence  \[f(G)= \min\{\diff(G_j)+j: j=0 \ldots n-2 \}.\]
 
Now   in each iteration we have to construct $G_ j$ from $G_ {j-1}$  by deleting the maximum degree $v_{1,j-1}$  from $G_{ j-1}$  
and compute $d_{1,j}$ and $d_{2,j}$  which can be done in $O(n)$ time running over the new degree sequence of $G_ j$ that can be computed from the degree sequence of $G_{j-1}$ by updating $d_{1,j}$ values in it .
 
 So the total running time for the algorithm is $O(n^2 + e(G) )  = O(n^2 )$.
\end{proof}

\begin{theorem} \label{theoremdelta}
Let $G$ be a graph on $n \geq 2$ vertices  with maximum degree $\Delta$, and $t \geq 1$ be an integer. 
\begin{enumerate}
\item{If $0 \leq \Delta \leq 1$, then $f(G)=0$.}
\item{If $\binom{t +1}{2} +  1 \leq \Delta \leq  \binom{t +2}{2}$, then  $f(G) \leq  t$,   and this bound is sharp for every $\Delta$ in the range.}
\item{For $\Delta \geq 1$, $f(G) \leq \left \lceil \frac{-3+\sqrt{8\Delta+1}}{2} \right \rceil$.}
\end{enumerate}

\end{theorem}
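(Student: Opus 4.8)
The plan is to dispose of part~(1) directly, to reduce part~(3) to part~(2) by solving a quadratic, and to concentrate the work on part~(2), whose upper bound is a clean telescoping estimate and whose sharpness needs an explicit construction. Part~(1) is immediate: if $\Delta \le 1$ then $G$ is a disjoint union of edges and isolated vertices. If $\Delta = 0$ all $n \ge 2$ vertices are $0$-vertices, and if $\Delta = 1$ the two endpoints of any edge are $1$-vertices; in either case at least two vertices already realise $\Delta(G)$, so $f(G) = 0$.

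For the upper bound in part~(2) I would run the greedy deletion process of Theorem~\ref{theorem_1}, which builds $G_0 = G, G_1, G_2, \dots$ (each $G_{j+1}$ obtained by deleting the unique maximum-degree vertex of $G_j$) and yields $f(G) = \min_j\{\diff(G_j) + j\}$; write $d_{1,j} \ge d_{2,j}$ for the two largest degrees of $G_j$. The single structural fact needed is that deleting the top vertex cannot raise any degree, so
\[
d_{1,j+1} \le d_{2,j} = d_{1,j} - \diff(G_j).
\]
Iterating from $d_{1,0} = \Delta$ gives $d_{1,t} \le \Delta - \sum_{j=0}^{t-1}\diff(G_j)$. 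Suppose, for contradiction, that $f(G) > t$. Then $\diff(G_j) + j > t$, hence $\diff(G_j) \ge t-j+1$, for every $j \in \{0,\dots,t\}$; summing for $j = 0,\dots,t-1$ gives $\sum_{j=0}^{t-1}\diff(G_j) \ge \sum_{j=0}^{t-1}(t-j+1) = \binom{t+2}{2} - 1$, so $d_{1,t} \le \Delta - \binom{t+2}{2} + 1 \le 1$ using $\Delta \le \binom{t+2}{2}$. But $G_t$ still has at least $\binom{t}{2}+2 \ge 2$ vertices (since $\Delta \ge \binom{t+1}{2}+1$ forces $n \ge \Delta + 1$), and a graph of maximum degree $0$ or $1$ on at least two vertices has two vertices realising its maximum degree, i.e.\ $\diff(G_t) = 0$, contradicting $\diff(G_t) \ge 1$; hence $f(G) \le t$. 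Part~(3) follows at once: given $\Delta \ge 1$, take $t$ to be the least integer with $\binom{t+2}{2} \ge \Delta$, and solving $\tfrac{(t+1)(t+2)}{2} \ge \Delta$ yields $t = \lceil \tfrac{-3+\sqrt{8\Delta+1}}{2}\rceil$, so part~(2) gives $f(G) \le t$.

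The delicate point is the sharpness of part~(2): for each $\Delta$ with $\binom{t+1}{2}+1 \le \Delta \le \binom{t+2}{2}$ I must build a graph of maximum degree exactly $\Delta$ with $f(G) = t$. The idea is to reverse-engineer the greedy process so that every gap is forced to equal $\diff(G_j) = t - j$ for $j = 0,\dots,t-1$, which keeps $\diff(G_j) + j = t$ throughout and prevents any stop before step $t$; the process then halts at $G_t$, whose maximum degree equals $\Delta - \binom{t+1}{2} \ge 1$ (this is precisely where the lower bound of the range is used) and is realised twice. Concretely I would prescribe the top of the degree sequence as the strictly decreasing staircase $d_{1,j} = \Delta - jt + \binom{j}{2}$ and realise it by a threshold-type graph, padding with enough low-degree vertices. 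I expect the main obstacle to be exactly this realisation step: checking that the prescribed sequence is graphical and that the padding creates no accidental repeated maximum at an earlier stage, so that the greedy algorithm is genuinely forced through all $t$ deletions.
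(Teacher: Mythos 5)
Your parts (1) and (3) are fine, and your upper-bound argument for part (2) is correct but genuinely different from the paper's. The paper proves $f(G)\leq t$ by induction on $t$: letting $u$ have the second-largest degree, either $\deg(u)\leq\binom{t+1}{2}$, in which case deleting the top vertex and invoking the induction hypothesis gives $f(G)\leq 1+(t-1)=t$, or $\deg(u)\geq\binom{t+1}{2}+1$, in which case $\diff(G)\leq\binom{t+2}{2}-\binom{t+1}{2}-1=t$ and Lemma \ref{lemma0} finishes. You instead run the greedy process of Theorem \ref{theorem_1} and telescope: from $d_{1,j+1}\leq d_{1,j}-\diff(G_j)$ and the contradiction hypothesis $f(G)>t$ (which forces $\diff(G_j)\geq t-j+1$ for $j=0,\dots,t$, all legal indices since $n\geq\Delta+1\geq\binom{t+1}{2}+2\geq t+2$) you deduce $d_{1,t}\leq\Delta-\binom{t+2}{2}+1\leq 1$ while $G_t$ still has at least two vertices, so $\diff(G_t)=0$, a contradiction. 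This is a valid, non-inductive argument; it trades the paper's case analysis for a summation over the whole deletion sequence, and both ultimately rest on Theorem \ref{theorem_1}. (One trivial point: for $\Delta=1$ your choice of $t$ would be $0$, outside the range of part (2), so part (3) at $\Delta=1$ must be routed through part (1), as the paper does.)

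The genuine gap is the sharpness claim in part (2), which is part of the statement to be proved. You only sketch a construction --- a staircase degree sequence $d_{1,j}=\Delta-jt+\binom{j}{2}$ realised by a ``threshold-type'' graph with padding --- and you explicitly leave unverified exactly the two points that carry all the content: that the sequence is graphical, and that the padding creates no accidental repeated maximum at an earlier stage. The obstacle you anticipate is real: in a connected (threshold-type) realisation, deleting the current maximum-degree vertex decrements the degrees of its neighbours, which include the other high-degree vertices, so the prescribed gaps $\diff(G_j)=t-j$ are not automatically preserved from one step to the next. The paper sidesteps this entirely by making the construction a disjoint union of stars: with $a_j=\binom{j+1}{2}+1$, take $G_\Delta=K_{1,\Delta}\cup K_{1,a_{t-1}}\cup\dots\cup K_{1,a_1}$. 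Vertex-disjointness means each deletion of a centre leaves every other component untouched; the gaps between consecutive centre degrees are $\Delta-a_{t-1}\geq a_t-a_{t-1}=t$ and $a_{j+1}-a_j=j+1$, so the greedy process of Theorem \ref{theorem_1} is forced through the centres in order, $\diff(G_j)+j=t$ for $1\leq j\leq t$, and hence $f(G_\Delta)=t$ with $\Delta(G_\Delta)=\Delta$ exactly. Until you either complete your realisation step or switch to a component-wise construction of this kind, the sharpness half of part (2) remains unproven.
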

\begin{proof}

Clearly, if $0 \leq \Delta \leq 1$ and $n \geq 2$, $f(G)=0$.

For (ii), we use induction on $t$.  For $t = 1$,   $2 \leq \Delta \leq 3$.  If there is one vertex  $v$ of degree $\Delta=2$.  Removing $v$ clearly  leaves at least two vertices of maximum degree equal to one or zero, and hence $f(G)=1$.  If there is a vertex $v$ of degree $\Delta=3$ and a vertex $u$ of degree 2, then by Lemma \ref{lemma0}, $f(G) \leq 3-2=1$.  Otherwise, all other vertices have degree 0 or 1 and deleting $v$ leaves at least two vertices of maximum degree equal to one or zero, and $f(G)=1$.

So assume statement is true for $t -1$ and we shall prove it is true for $t$.
 
By assumption,  $\binom{t +1}{2} +  1 \leq \Delta \leq  \binom{t +2}{2}$. Let $v$ be a vertex of maximum degree and $u$ a vertex with the second largest degree ---  clearly  \[\deg(u) \leq \deg(v) \leq \binom{t +2}{2}.\]
 
%If $ \deg(v) \leq \binom{t+1}{2}$, then by induction $f(G)  \leq t - 1$ and we are done. So we may assume  $\binom{t+1}{2}  + 1 %\leq \deg(v) \leq \binom{t+2}{2}$.
 
Now consider $\deg(u)$. 
\begin{enumerate}
\item{if  $\deg(u) \leq \binom{t+1}{2}$ we drop $v$ to get $G \backslash \{ v \} = H$  where $\Delta(H)\leq \binom{t+1}{2}$, and by induction  $f(G) \leq f(H) +1 \leq t - 1 +1  = t$  and we are done.}
\item{if $\deg(u) \geq \binom{t+1}{2}  +1$  then clearly  \[\diff(G) = \deg(v) - \deg(u) \leq \binom{t+2}{2}  -  \binom{t+1}{2}  - 1  =  t ,\]  hence by Theorem \ref{theorem_1} $f(G)  \leq \diff(G) \leq t$  and we are done.}
\end{enumerate}

Sharpness:   consider the sequence  $a_ j = \binom{ j +1}{2}  +1$  i.e.  $a_1 = 2$,  $a_2 = 4$,  $a_3 = 7$  etc. and let  $\Delta=\binom{ t +1}{2}+ j$,  $j = 1, \ldots, t +1$.  For example, if $t = 4$, $\Delta= 11,12,13,14,15$.  
 
Consider the graph  $G_{\Delta}$  consisting of the stars  $K_{1,a_j}$ for $j = 1, \ldots, t-1$  and a ``big star"  $K_{1,\Delta}$.
 
Suppose for example $\Delta= 13$ , which is the case $t = 4 $, since $ \binom{ t+1}{2}=\binom{5}{2} < 13  < \binom{6}{2}=\binom{t+2}{2}.$   The sequence of stars  we choose involves $a_1 ,a_2 ,a_3$  and $\Delta$, that is $ K_{1,2} \cup  K_{1,4}  \cup  K_{1,7} \cup  K_{1,13}$, and this realises $f(G) = 4$ as required.  The validity of this construction is a simple application of Theorem \ref{theorem_1}.
 
So  this construction shows the bound  is sharp for every $\Delta \geq1$.

For (iii),  from part (ii) above ($t \geq 1$ and $\Delta \geq 2$), we get  $t^2 + 3t +2 -2\Delta \leq 0$. Solving the quadratic and rounding up, since $t$ must be an integer, we get  \[f(G) \leq t = \left \lceil \frac{-3 + \sqrt{ 1 +8\Delta}}{2} \right \rceil,\] which holds true also for the case $\Delta=1$.
\end{proof}

\begin{theorem} \label{theorem_n}
 Let $G$ be a graph on $n \geq 4$, and $t \geq 1$  an integer such that   \[\binom{t+1}{2}+3 \leq n \leq \binom{ t + 2}{2} +2.\]  Then $f(G) \leq t$, and this is sharp  for all values of $n$ in the range. Also, for $n \geq 4$, \[ f(G) \leq \left \lceil \frac{  - 3  +\sqrt{8n-15}}{2} \right \rceil.\]
\end{theorem}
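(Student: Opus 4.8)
The plan is to reduce the statement to the degree bound already proved in Theorem \ref{theoremdelta} and then to treat a single boundary case by hand. Since any graph on $n$ vertices has $\Delta(G)\le n-1$, the hypothesis $n\le\binom{t+2}{2}+2$ gives $\Delta\le\binom{t+2}{2}+1$. If in fact $\Delta\le\binom{t+2}{2}$, then part (iii) of Theorem \ref{theoremdelta} immediately yields $f(G)\le\lceil\frac{-3+\sqrt{8\Delta+1}}{2}\rceil\le t$, because $8\binom{t+2}{2}+1=(2t+3)^2$ makes the bound evaluate to exactly $t$ at $\Delta=\binom{t+2}{2}$. So the whole difficulty is concentrated in the single remaining value $\Delta=\binom{t+2}{2}+1$, which forces $n=\binom{t+2}{2}+2$ together with a dominating vertex $v$ of degree $n-1$.

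For that boundary case I would split on the second largest degree $d_2$. If $d_2\ge\binom{t+1}{2}+2$, then $\diff(G)=\Delta-d_2\le(\binom{t+2}{2}+1)-(\binom{t+1}{2}+2)=t$, so Lemma \ref{lemma0} gives $f(G)\le\diff(G)\le t$. If instead $d_2\le\binom{t+1}{2}+1$, I delete the dominating vertex $v$; since $v$ is adjacent to everything, every remaining degree drops by exactly one, so $\Delta(G-v)=d_2-1\le\binom{t+1}{2}$, whence Theorem \ref{theoremdelta}(ii) gives $f(G-v)\le t-1$. As any $2$-equating set of $G-v$ together with $v$ is a $2$-equating set of $G$, we get $f(G)\le f(G-v)+1\le t$. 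These two subcases exhaust all values of $d_2$, completing the bound $f(G)\le t$. The closed form is then pure algebra: the condition $n\le\binom{t+2}{2}+2$ is equivalent to $t\ge\frac{-3+\sqrt{8n-15}}{2}$ (solve the quadratic $t^2+3t+6-2n\ge0$), and since the intervals $[\binom{t+1}{2}+3,\binom{t+2}{2}+2]$ tile the integers $\ge4$, the least admissible $t$ is $\lceil\frac{-3+\sqrt{8n-15}}{2}\rceil$, giving the stated bound.

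The main work, and the part I expect to be hardest, is sharpness: for every $n$ in the range I must exhibit a graph with $f=t$. Because appending isolated vertices changes neither the maximum degree nor any \diff value encountered in the peeling of Theorem \ref{theorem_1}, it suffices to build a single graph $G^{(t)}$ on exactly $\binom{t+1}{2}+3$ vertices with $f(G^{(t)})=t$ and then pad; the $t+1$ integers of the range are all covered this way. I would construct $G^{(t)}$ recursively: starting from $G^{(1)}=P_3$, form $G^{(t)}$ from $G^{(t-1)}$ by adding one new vertex $v_0$ joined to every vertex of $G^{(t-1)}$ except its unique vertex of maximum degree, together with $t-1$ pendant vertices attached only to $v_0$. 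A direct count shows $|G^{(t)}|=\binom{t+1}{2}+3$ and $\deg v_0=\Delta(G^{(t)})=\binom{t+1}{2}+1$, and since $v_0$ avoids the old maximum vertex one checks $\diff(G^{(t)})=t$ and $G^{(t)}-v_0=G^{(t-1)}\cup(t-1)K_1$. Feeding this into Theorem \ref{theorem_1} makes the successive differences equal to $t,t-1,\ldots,1,0$, so every term $\diff(G_j)+j$ equals $t$ and $f(G^{(t)})=t$.

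The delicate points I would verify carefully are exactly those that caused the naive disjoint-star examples of Theorem \ref{theoremdelta} to use too many vertices: I must confirm that $v_0$ really is the unique maximum (so that it is the vertex deleted at the first peeling step forced by Lemma \ref{lemma_2}), that the vertices boosted by $v_0$ never overtake the preserved old maximum (this needs $t\ge3$, with $t=1,2$ checked by hand), and that the vertex count lands on $\binom{t+1}{2}+3$ at every level. The telescoping of the count, $|G^{(t)}|=|G^{(t-1)}|+1+(t-1)=|G^{(t-1)}|+t$, is the reason the triangular number $\binom{t+1}{2}$ appears, and getting this recursion to be self-consistent in the maximum degrees (each level producing $\binom{t+1}{2}+1$, the minimum feasible $\Delta$ for $f=t$) is the crux of the argument.
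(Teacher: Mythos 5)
Your upper-bound argument and the closed form are correct and essentially identical to the paper's own proof: reduce to Theorem \ref{theoremdelta} when $\Delta(G)\le\binom{t+2}{2}$, and in the one boundary case $\Delta(G)=\binom{t+2}{2}+1$ (which indeed forces $n=\binom{t+2}{2}+2$ and a dominating vertex $v$) split on $d_2$, using $f(G)\le\diff(G)\le t$ when $d_2\ge\binom{t+1}{2}+2$ and $f(G)\le 1+f(G-v)$ with $\Delta(G-v)=d_2-1\le\binom{t+1}{2}$ otherwise. The quadratic manipulation at the end is also the paper's.

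The sharpness construction, however, contains a genuine error: the base case is off by one, and this breaks the whole recursion. With $G^{(1)}=P_3$ you have $|G^{(1)}|=3$, not $\binom{2}{2}+3=4$, so the recursion $|G^{(t)}|=|G^{(t-1)}|+t$ gives $|G^{(t)}|=\binom{t+1}{2}+2$, not $\binom{t+1}{2}+3$, and $\deg v_0=(|G^{(t-1)}|-1)+(t-1)=\binom{t+1}{2}$, not $\binom{t+1}{2}+1$; the ``direct count'' you invoke is false as stated. Concretely, for $t=2$ your graph is the $5$-vertex graph with degree sequence $3,2,2,2,1$, and deleting the single pendant vertex leaves four vertices of degree $2$, so $f(G^{(2)})=1$, not $2$. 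Moreover, no cleverer verification can rescue the stated construction: by the upper bound you have just proved (applied with $t-1$ in place of $t$), every graph on $\binom{t+1}{2}+2=\binom{(t-1)+2}{2}+2$ vertices has $f\le t-1$, so a graph on that many vertices with $f=t$ cannot exist. The repair is simple: take $G^{(1)}=P_3\cup K_1$ (four vertices). Then $|G^{(t)}|=\binom{t+1}{2}+3$, $\deg v_0=\binom{t+1}{2}+1$, the old maximum vertex keeps degree $\binom{t}{2}+1$ and remains the second-largest (boosted vertices reach at most $\binom{t-1}{2}+2\le\binom{t}{2}+1$ for $t\ge 2$), so $\diff(G^{(t)})=t$ and $G^{(t)}-v_0=G^{(t-1)}\cup(t-1)K_1$; the peeling of Theorem \ref{theorem_1} then yields $\diff(G_j)+j=t$ at every step and $f(G^{(t)})=t$ as you intended. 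With this correction, your recursive one-graph-per-$t$, pad-with-isolated-vertices argument is a legitimate alternative to the paper's sharpness proof, which instead exhibits an explicit graph $G_n$ for each $n$ in the range (a dominating vertex $v_t$ together with vertices $v_q$ of degree $a_q+1$ attached to a common set of leaves) and verifies $\diff(G_j)=t-j$ along the peeling.
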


\begin{proof}
Observe that  for $\binom{t+1}{2}+3 \leq n \leq \binom{ t + 2}{2} +1$  it follows that if $|G|= n$ then   $\Delta(G)\leq n-1$ hence  $\binom{t + 1}{2}  +2 \leq \Delta(G) \leq \binom{t +2}{2}$ and $f(G) \leq t$ by Theorem \ref{theoremdelta}

We  now construct for every $n$,  such that   $\binom{t +1}{2}+3 \leq  n \leq  \binom{t +2}{2} +1$,  a graph $G_n=G$ with $f(G) = t$ proving sharpness.  
 
Let $n = \binom{t +1}{2} + j  : j =3,\ldots, t +2$.  

% ( so for t = 5  n  = 18,19,20,21,22  =  ( 5 +1 choose 2) + (5+2)  just to check  covering the range of \Delta = 17,18,19,20,21  - %Observe the case n = 17  \Delta = 16 is not considered )  ) .
 
Let   $A = \{  v_1,v_2,\ldots, v_t\}$  and $B = \{  u_1,\ldots, u_{n-t}\}$.  

Vertex $v_t$ is adjacent to all other vertices so that $\deg(v_t) = n-1 =  \binom{t +1}{2}+ j -1$.  Vertex $v_ q$, for $q = t-1,\ldots,1 $ has degree $\deg(v_q) =  \frac{q^2 +q + 2}{2} +1=a_q+1$,  where $v_q$  is adjacent to $v_t$  and to $u_1,\ldots, u_{a_q}$.   Figure \ref{graph1} shows the case when $t=3$ and $j=3$
  i.e. $n=\binom{3+1}{2} +3=9$.

\begin{figure}[h!]
\centering
\includegraphics{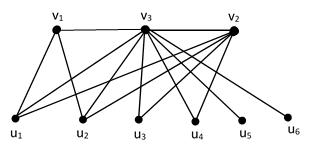} 
\caption{The graph $G_n$ for $t=3$ and $j=3$} \label{graph1}
\end{figure}

We now apply Theorem \ref{theorem_1} to $G$.  Then \[\diff(G)= \binom{t +1}{2}+j-1- \frac{(t-1)^2+t-1+2}{2}+1=t+j-3 \geq t.\]  Hence, we  apply Theorem \ref{theorem_1} by deleting $v_t$ to give a new graph $G_1$ on $n-1$ vertices  in which $\deg(v_i)$, $i=1 \ldots t-1$, as well as all the degrees of vertices in $B$ adjacent to $v_t$ are reduced by 1, and hence $\diff(G_1)=\deg(v_{t-1})-\deg(v_{t-2})=t+j-1 \geq t-1$  Therefore $f(G) \geq 1+\diff(G_1) \geq t$ and we again apply Theorem \ref{theorem_1} to delete $v_{t-1}$.  The degrees of $v_{t-2} \ldots v_1$ now remain unchanged, and for $i=2 \ldots t-1$, $\deg(v_{t-i})-\deg(v_{t-i-1})=t-i$, and the vertices are not adjacent to each other.  Hence it follows that, at each step, $\diff(G_i)=t-i$,  which, by Theorem \ref{theorem_1}, implies that $f(G)=\min\{\diff(G_j)+j: j=0,\ldots,n-2\}=t$.

Let us now look at the case when $|G| = n = \binom{t +2}{ 2}  +2$.
\begin{enumerate}
\item{If $\Delta(G)  \leq  \binom{t +2}{ 2} $. Then by Theorem \ref{theoremdelta}, $f(G) \leq t$ and we are done.}
\item{So  $\Delta = \binom{t +2}{ 2}    +1$.  Let $v_1$  and $v_2$  be such that $\deg(v_1) = \Delta$  and $v_2$ has the second largest degree.   Observe that $v_1$ is adjacent to all vertices of $G$. Now if  $\deg(v_2)  \geq  \binom{t +1}{ 2}   +2$, then $\diff(G) \leq t$ and again we are done by Theorem \ref{theorem_1}.

So $\deg(v_2) \leq  \binom{t +1}{ 2}+1    $.  We delete $v_1$ to get the graph $G_1$.  Clearly $\Delta(G_1) = \deg(v_2)  - 1  \leq \binom{t +1}{ 2} $  and by the Theorem \ref{theoremdelta}, $f(G_1)\leq t - 1$  hence $f(G) \leq t$.}
\end{enumerate}

For, sharpness we can take the graph $G$  constructed above on $n$ vertices for $ n =\binom{t +2}{ 2}  +1$ and add an isolated vertex.

Now for a graph $G$ on $n$ vertices with $4 \leq n \leq \binom{t +2}{ 2} +2$,  we know $g(G) \leq t$, hence we get  $ t^2 +3t  - 2n  + 6 \geq 0$, and solving the quadratic gives   \[f(G)  = t \leq \left   \lceil \frac{ -3  +\sqrt{ 8n -15 }}{2} \right \rceil.\]
\end{proof}

\section{Trees and Forests}

We have determined  the maximum possible value for $f(G)$  with respect to $\Delta(G)$  (Theorem \ref{theoremdelta}) and with respect to  $|G| = n$. (Theorem \ref{theorem_n}).
 
We propose the problem of finding $\max \{  f(G) : G \mbox{  is a forest  on $n$ vertices} \}$  and conjecture  the following :
 
\begin{conjecture} \label{con_trees}
If  $F$  is a forest on $n$ vertices, where   $n \leq \frac{t^3 + 6t^2 +17t +12}{6}$  then   $f(F) \leq t$  and this is sharp.
 \end{conjecture}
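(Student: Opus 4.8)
The plan is to prove both halves of Conjecture \ref{con_trees} by combining the greedy characterisation of $f$ in Theorem \ref{theorem_1} with the single structural feature that distinguishes forests, namely $|E(F)| \le n-1$. For the upper bound I would argue by contraposition: assume $F$ is a forest with $f(F) \ge t+1$ and show that necessarily $n \ge \frac{t^3+6t^2+17t+18}{6}$, which is exactly one more than the stated threshold $\frac{t^3+6t^2+17t+12}{6}$. Let $G_0 = F, G_1, G_2, \dots$ be the greedy sequence, where $G_{j+1}$ is $G_j$ minus its maximum-degree vertex $v_j$, and write $M_j$ for the maximum degree of $G_j$. Since $f(F) = \min_j(\diff(G_j)+j) \ge t+1$, every term obeys $\diff(G_j) \ge t+1-j$, so in particular $\diff(G_j) \ge 1$ for $0 \le j \le t$; hence each maximum is genuinely unique and the deletion process is well defined through $G_t$.

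The first key step is a degree-decay estimate. After deleting $v_j$, every surviving vertex had degree at most $d_2(G_j)$ in $G_j$, so $M_{j+1} \le d_2(G_j) = M_j - \diff(G_j) \le M_j - (t+1-j)$. Telescoping this inequality down to level $t$ yields $M_j \ge M_t + \binom{t+2-j}{2} - 1$ for $0 \le j \le t$. The second, and decisive, step is the observation that $M_t \ge 2$: because $\diff(G_t) \ge 1$, the graph $G_t$ has a \emph{unique} vertex of maximum degree, and since no graph can contain exactly one vertex of degree $1$ (the far endpoint of its edge also has degree $\ge 1$), we must have $M_t \ne 1$, and of course $M_t \ne 0$.

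The third step is the edge count, where acyclicity finally enters. Counting each edge of $F$ at the first moment one of its endpoints is deleted shows every edge is charged to at most one $v_j$, so $\sum_{j=0}^{t} M_j \le |E(F)| \le n-1$. Substituting the telescoped bound and $M_t \ge 2$ gives
\[
n-1 \;\ge\; \sum_{j=0}^{t} M_j \;\ge\; (t+1)M_t + \binom{t+3}{3} - (t+1) \;\ge\; (t+1) + \binom{t+3}{3},
\]
and since $\binom{t+3}{3} = \frac{(t+1)(t+2)(t+3)}{6}$ this rearranges to $n \ge \frac{t^3+6t^2+17t+18}{6}$, completing the contrapositive. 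It is worth noting that dropping the $M_t \ge 2$ refinement and using only $M_t \ge 1$ would give the weaker $n \ge \binom{t+3}{3}+1$, short of the target by exactly $t+1$; thus the delicate point is that the ``$+2$'' is precisely what upgrades the crude bound to the sharp one.

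For sharpness I would exhibit a tree on exactly $\frac{t^3+6t^2+17t+18}{6}$ vertices achieving $f = t+1$, built from $t+1$ hubs of degrees $\binom{j+1}{2}+1$ (the $a_j$ from Theorem \ref{theoremdelta}) strung along a backbone by degree-$2$ connector vertices, so that consecutive hubs are non-adjacent, with the remaining edges going to pendant leaves. Arranging the backbone so the degree-$2$ hub sits at one end makes the construction feasible, and a direct check via Theorem \ref{theorem_1} shows that deleting hubs in decreasing order gives $\diff(G_j) = t+1-j$ at every step and hence $f = t+1$; the vertex count matches because every edge is incident to a hub, forcing $\sum_j M_j = |E|$ and $n = |E|+1$. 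I expect the main obstacle to lie here rather than in the upper bound: one must verify that the connectors do not create spurious ties that corrupt the values $\diff(G_j)$ (the near-collisions at the $M_t = 2$ level need handling) and that the construction yields precisely the claimed order for every $t$.
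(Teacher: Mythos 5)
You should know at the outset that the paper does not prove this statement at all: it is posed as Conjecture \ref{con_trees} (and repeated among the open problems), and the paper's only supporting evidence is the sharpness construction $T_t$ on $\frac{t^3+6t^2+17t+18}{6}$ vertices with $f(T_t)=t+1$, plus an ad hoc case analysis establishing the single instance $t=2$ (every forest on at most $13$ vertices has $f\le 2$). Your argument is therefore not an alternative route to the paper's proof --- it is a proof of something the paper leaves open, and after checking each step I believe it is correct. The pieces all hold: $f(F)\ge t+1$ forces $n\ge t+3$ (since $\diff(G_{n-2})=0$ gives $f(F)\le n-2$), so $G_0,\dots,G_t$ exist; only the easy direction of Theorem \ref{theorem_1}, namely $f(F)\le \diff(G_j)+j$, is needed to conclude $\diff(G_j)\ge t+1-j$, whence each $G_j$ with $0\le j\le t$ has a unique maximum-degree vertex; the decay $M_{j+1}\le d_2(G_j)=M_j-\diff(G_j)$ telescopes to $M_j\ge M_t+\binom{t+2-j}{2}-1$; the edge sets counted by $M_0,\dots,M_t$ are pairwise disjoint (each consists of edges with endpoint $v_j$ lying inside $G_j$, and $v_i\notin G_j$ for $i<j$), so $\sum_{j=0}^t M_j\le |E(F)|\le n-1$; and with $M_t\ge 2$ the hockey-stick identity gives $n\ge t+2+\binom{t+3}{3}=\frac{t^3+6t^2+17t+18}{6}$, exactly the contrapositive of the conjectured bound. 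Note that acyclicity enters only through $|E(F)|\le n-1$, so you have in fact proved the stronger statement that \emph{any} graph $G$ with $f(G)\ge t+1$ has at least $t+1+\binom{t+3}{3}$ edges, which is tight for the disjoint unions of stars used in Theorem \ref{theoremdelta}; your chain $M_t\ge 2$, $M_{t-1}\ge 4$, $M_{t-2}\ge 7,\dots$ is the uniform-in-$t$ version of the observations $d_2\ge 4$, $d_1\ge 7$ in the paper's $13$-vertex proposition, with edge counting replacing its enumeration of double stars.

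Two repairs are needed before this stands as written. First, your justification of $M_t\ge 2$ is misstated: ``no graph can contain exactly one vertex of degree $1$'' is false in general (a triangle with a pendant edge is a counterexample). What you need, and what your parenthetical actually establishes, is that a graph whose \emph{maximum} degree equals $1$ cannot have a unique vertex of degree $1$, since the neighbour of such a vertex would also have degree exactly $1$; phrase it that way and the step is airtight. Second, you need not be tentative about sharpness: your hub-and-backbone tree is precisely the paper's $T_t$, and the ``spurious ties'' you worry about cannot occur --- for $j\le t-1$ the maximum degree of $G_j$ is $a_{t+1-j}\ge 4$ while every connector has degree at most $2$, and after the $t$-th hub deletion the surviving graph is a path on three vertices plus isolated vertices, so $\diff(G_j)=t+1-j$ for all $0\le j\le t$ and Theorem \ref{theorem_1} (or the paper's own inductive verification) gives $f(T_t)=t+1$ on exactly $\frac{t^3+6t^2+17t+18}{6}$ vertices.
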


The following construction shows that if the conjecture is true then the upper bound is  best possible.

Consider the sequence $a_ j = \binom{ j +1}{2}  +1$.  For $t \geq 0$ we define a tree $T_t$ on $b_t$ vertices  as follows:  Let $P_{2t +3}$ be a path   on $2t +3$ vertices.   Now to the vertex  $v_{2j}$ for  $j = 1,\ldots,t+1$  of the path we add exactly  $ x_ j  = a_ j - 2  =  \binom{ j +1}{2} - 1$ leaves,  so that $ x_1 = 0$,  $x_2 = 2 $ and so on.
 
Clearly $\deg(v_1) = \deg(v_{2t+3}) = 1$,  and for $t \geq 1$,  $\deg(v_{2j +1} ) = 2$ for $j =1,\ldots,t$  while $\deg(v_{2j} ) =  a_ j$ for $j = 1,\ldots,t+1$.

Now for $t = 0$  we get $T_0=K_{1,2}$ with $b_t=3$,  for $t = 1$  we get $b_t=7$ and for $t=2$, $b_t=14$, as shown in Figure \ref{tree1}.

\begin{figure}[h!]
\centering
\includegraphics{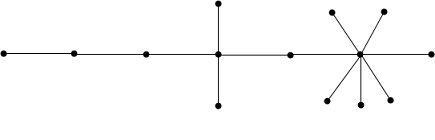} 
\caption{The tree $T_2$ on 14 vertices} \label{tree1}
\end{figure}

The number of vertices in $T_t$ is  $b_t=\frac{t^3+6t^2+17t+18}{6}$.  We prove this by induction on $t$.  For $t=0$, $b_0=3$ and for $t=1$, $b_1=7$ as required.  So let us assume it is true for $b_{k-1}$.  Then \[b_k=b_{k-1}+\frac{k^2+3k+4}{2}\] \[=\frac{(k-1)^3+6(k-1)^2+17(k-1)+18}{6}+\frac{k^2+3k+4}{2}\]\[=\frac{k^3+6k^2+17k+18}{6}\] as required.

So $\diff(T_t)= \frac{t^2+3t+4}{2} - \frac{(t-1)^2+3(t-1)+4}{2}=  t+1$.  We can, again by induction on $t$, show that $f(T_t)=t+1$, for $t \geq 1$.  Clearly  $f(T_0)=1$.  Suppose that $f(T_t) \leq t < \diff(T_t)$.  Then we should remove the vertex of degree $\Delta$ in order to obtain a minimal 2-equating set.  But this leaves isolated vertices and the tree $T_{t-1}$.  But $f(T_{t-1})=t$, by induction, and hence $f(T_t)=t+1$, a contradiction.

%We now show that Conjecture \ref{con_trees} is true for $n=13$.  We prove it for a forest of order 13, since for orders less than %13, we can add isolated vertices to complete it to a forest on 13 vertices and repeat the proof.

\begin{proposition}
Let $F$ be a forest on   13 vertices.  Then $f(F) \leq  2$.
\end{proposition}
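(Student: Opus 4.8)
The plan is to prove this by splitting on the maximum degree $\Delta=\Delta(F)$, using Theorem \ref{theoremdelta} when $\Delta$ is small and exploiting the scarcity of edges in a forest when $\Delta$ is large. (Note that this is exactly the $t=2$ instance of Conjecture \ref{con_trees}, since $\frac{t^3+6t^2+17t+12}{6}=13$ when $t=2$.) Throughout I would use that a forest on $13$ vertices is acyclic and hence has at most $12$ edges. First I would dispose of the case $\Delta \leq 6$ directly from Theorem \ref{theoremdelta}: $\Delta \leq 1$ gives $f(F)=0$, then $2 \leq \Delta \leq 3=\binom{3}{2}$ gives $f(F)\leq 1$, and $4=\binom{3}{2}+1 \leq \Delta \leq 6=\binom{4}{2}$ gives $f(F)\leq 2$. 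So $\Delta \leq 6$ already yields $f(F)\leq 2$.

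The substance is the case $\Delta \geq 7$. Here I would take a vertex $v$ of maximum degree and set $H=F-v$, a forest on $12$ vertices. Since the $\Delta$ edges incident to $v$ are among the at most $12$ edges of $F$, we have $|E(H)|=|E(F)|-\Delta \leq 12-\Delta \leq 5$. The reduction is then to adjoin $v$ to any $2$-equating set of $H$: in $F$ minus that set we recover $H$ minus a $2$-equating set, so $f(F) \leq 1+f(H)$ (the easy inequality underlying Lemma \ref{lemma_2}). Thus everything comes down to the self-contained claim that \emph{every forest $H$ with at most $5$ edges satisfies $f(H)\leq 1$}, which would give $f(F)\leq 1+1=2$.

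To prove that claim I would split once more on $\Delta(H)\leq 5$. If $\Delta(H)\leq 3$, Theorem \ref{theoremdelta} gives $f(H)\leq 1$ outright. If $\Delta(H)\in\{4,5\}$, the near-star structure forced by the edge budget does the work: a vertex $v'$ of degree $\Delta(H)$ consumes $\Delta(H)$ of the at most $5$ edges (and is the unique maximum, since two such vertices would need far more than $5$ edges), so $H-v'$ retains at most $5-\Delta(H)\leq 1$ edge. Hence $\Delta(H-v')\leq 1$, and among the remaining $11$ vertices at least two realise this maximum degree — the two endpoints of the one surviving edge, or else two of the isolated vertices — so $\diff(H-v')=0$. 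Taking $j=1$ in the formula of Theorem \ref{theorem_1} then yields $f(H)\leq \diff(H-v')+1=1$.

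The main obstacle, and the reason $13$ is the sharp threshold, is precisely the borderline $\Delta\in\{7,8\}$, where $H$ may have maximum degree $4$ or $5$. For such $H$ the generic estimate of Theorem \ref{theoremdelta} only gives $f(H)\leq 2$, which would degrade to the useless $f(F)\leq 3$; the upgrade to $f(H)\leq 1$ rests entirely on the forest having so few edges that a degree-$4$ or degree-$5$ vertex is essentially the whole graph. I expect the write-up to be routine once this refinement is isolated as the lemma above, and the same scheme — delete a maximum-degree vertex to descend into a forest with a controlled edge count, then induct — is what one would iterate toward the general Conjecture \ref{con_trees}.
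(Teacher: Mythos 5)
Your proof is correct, and it takes a genuinely different route from the paper's. The paper works directly with the degree sequence $d_1 \geq d_2 \geq d_3$ of $F$: it reduces (via Lemma \ref{lemma0} and Theorem \ref{theoremdelta}) to the situation $d_2 = 4$, $d_1 \geq 7$, $d_3 \leq 3$, with the two top-degree vertices in the same component and non-adjacent, and then explicitly enumerates the few forests on $13$ vertices meeting these constraints (subdivided double stars such as $S_{4,8}$ and $S_{4,7}$ with an extra vertex attached), verifying $f(F) = 2$ for each by deleting the two centres. Your argument replaces that structural enumeration with an edge-budget count: for $\Delta(F) \geq 7$ you delete a maximum-degree vertex $v$ and observe that the residual forest $H$ has at most $12 - \Delta \leq 5$ edges, and any such forest on enough vertices has $f(H) \leq 1$ --- by Theorem \ref{theoremdelta} when $\Delta(H) \leq 3$, and otherwise because deleting the (necessarily unique) vertex of degree $4$ or $5$ leaves at most one edge, so $\diff$ of the resulting graph is $0$ and $j=1$ in Theorem \ref{theorem_1} finishes. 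Combined with the unconditional inequality $f(F) \leq 1 + f(F - v)$ (the easy half of the argument in Lemma \ref{lemma_2}, valid here since no degenerate small subgraphs arise), this gives $f(F) \leq 2$; the case $\Delta(F) \leq 6$ is immediate from Theorem \ref{theoremdelta}. What the paper's enumeration buys is an explicit picture of the near-extremal forests, which sits naturally next to the sharpness example $T_2$ on $14$ vertices; what your argument buys is brevity and robustness (no case can be accidentally dropped from an enumeration), plus a descent scheme --- delete the top vertex, control the surviving edge count, induct --- that, as you note, is the natural line of attack on Conjecture \ref{con_trees}.
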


\begin{proof}
Let $F$ be a forest on 13 vertices with degree sequence $d_1 \geq d_2 \geq d_3  \geq \ldots \geq d_{13}$, and let $u$, $v$ and $w$ be vertices of degree $d_1$, $d_2 $  and $d_3 $ respectively.
 
We observe the following facts :
 \begin{enumerate}
\item{We may assume that $d_2 \geq 4$,  for otherwise we delete $u$  and let $ F^* = F-u$. Then $\Delta(F^*) \leq 3$ and by Theorem \ref{theoremdelta},  $f(F^*) \leq 1$  hence $f(F) \leq 2$.}
\item{Therefore $d_1 - d_2 \geq 3$,  for otherwise $f(F) \leq \diff(F) \leq 2$.}
\item{Hence we may assume $d_2  \geq 4$ and $d_1 \geq  7$.}
\item{$d_3  \leq 3$ --- otherwise since  $d_1 \geq 7$, $ d_2 ,d_3 \geq 4$,  we get (even in the worst case where $u$, $v$, $w$  induce a path of three vertices in some order) $ |F| \geq 14$.}
\item{If $u$ and $v$ are not in the same component then from $|F| = 13$ we get   $F = K_{1,4} \cup  K_{1,7}$, and $f(F)  = 2$,  by deleting the centers of the stars. We now use the notation $S_{a,b}$ to denote  the double star with adjacent centres of degrees $a$ and $b$.  }
\item{If $d_2 \geq 5$ and $d_1 \geq 8$  then $|F| = 13$ if and only if  $F = S_{5,8}$  and $f(S_{5,8}) = 2$.  So we assume $d_2 = 4$  and $u$ and $v$ are in the same component.}
\item{If $u$ and $v$ are adjacent, let $F^* = F-u$. Then $\Delta(F^*) \leq 3$ and by Theorem \ref{theoremdelta} $f(F^*) \leq 1$ hence $f(F) \leq 2$.}
 \end{enumerate}

Therefore,  $F$ can be one of the following  graphs:
 \begin{itemize}
\item{$S_{4,8}$ with the edge between the centres subdivided.  Clearly $f(F) = 2$.}
\item{$S_{4,7}$  with the edge between the centres subdivided twice.  Clearly $f(F) = 2$.}
\item{$S_{4,7}$ with the edge between the centres subdivided and another vertex added  adjacent to a leaf of the vertex of degree 7 or of degree 4.  In both two cases $f(F) = 2$.}
\item{$S_{4,7}$ with the edge between the centres subdivided by a vertex $w$ and to the vertex  $w$ we attach a leaf so that $\deg( w)  = 3$. Clearly $f(F) = 2$.}
\end{itemize}
 
In all these cases we only need to delete vertices $u$ and $v$ to get at least two vertices of maximum degree, and hence $f(F) \leq 2$.
 
Observe that if $|F| < 13$ we may add $0$-vertices to get a forest on 13 vertices and the same argument applies, hence for $|F|  \leq 13$,  $f(F)  \leq 2$..
%Note that if $|F|<13$, one can add isolated vertices  to increase the order to 13, and the above result applies.

\end{proof}
 
For $n=14$ we have the graph $T_2$ (Figure \ref{tree1}) which has exactly $\frac{2^2+6(2^2)+17(2)+18}{2}=14$ vertices and we know that $f(T_2)=3$.

We now prove the following result:
\begin{theorem} \label{forests}
Let $F$ be a forest on $n$ vertices and $k \geq 2$ an integer.  Suppose $n^{\frac{1}{3}} \geq 2k-1$.   Then \[f_k(F) \leq (2k-1) \lfloor n^{\frac{1}{3}} \rfloor.\]
\end{theorem}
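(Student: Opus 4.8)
The plan is to sidestep the delicate ``forced deletion'' machinery behind Theorem \ref{theorem_1} and instead use only the cheap, constructive direction of such a characterisation, generalised to $k$ tied vertices. For a graph $G$ with degree sequence $d_1 \ge d_2 \ge \cdots$ define $\Phi_k(G) = \sum_{i=1}^{k-1}(d_i - d_k)$, so that $\Phi_2(G) = \diff(G)$. First I would record the $k$-analogue of Lemma \ref{lemma0}: by deleting, for each $i < k$, exactly $d_i - d_k$ surplus neighbours of the $i$-th largest vertex one can pull the top $k$ degrees down to the common value $d_k$, which then becomes the maximum realised by at least $k$ vertices; hence $f_k(G) \le \Phi_k(G)$. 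Combining this with the greedy peeling of Theorem \ref{theorem_1} (where $G_{j+1} = G_j \setminus \{v_{1,j}\}$ deletes a maximum-degree vertex and $d_{i,j}$ is the $i$-th largest degree of $G_j$) gives the one-sided bound
\[ f_k(F) \le j + \Phi_k(G_j) \qquad \text{for every } j. \]
The only real care needed here is that the neighbours deleted to reduce $v_{i,j}$ must avoid the other chosen vertices and their shared neighbours; since a forest has no $4$-cycle, two vertices share at most one neighbour, so this is harmless whenever $d_{k,j}$ is not tiny --- and when $d_{k,j}$ is tiny $\Phi_k(G_j)$ is already small.

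Next, assume for contradiction that $f_k(F) > (2k-1)t$ with $t = \lfloor n^{1/3} \rfloor \ge 2k-1$, and set $L = (2k-1)t$. The displayed inequality forces $\Phi_k(G_j) > L - j$ for all $j \le L$, and since $\Phi_k(G_j) \le (k-1)(d_{1,j} - d_{k,j})$ we get $d_{1,j} - d_{k,j} > (L-j)/(k-1)$. I would then process the deletions in super-steps of length $k-1$: deleting the $k-1$ largest-degree vertices of $G_j$ leaves a graph of maximum degree at most $d_{k,j}$. Writing $D_s$ for the maximum degree of $G_{s(k-1)}$ and $R = L/(k-1)$, this yields the drop
\[ D_s - D_{s+1} \ \ge\ \frac{\Phi_k\!\left(G_{s(k-1)}\right)}{k-1} \ >\ \frac{L - s(k-1)}{k-1} \ =\ R - s, \]
so telescoping gives the quadratic lower bound $D_s > \binom{R-s+1}{2}$. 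This quadratic growth of the maximum degree is the whole engine: it is exactly what forces a forest realising large $f_k$ to be ``cubically large''.

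Finally I would account for deleted edges. Every vertex removed during super-step $s$ has, at its moment of removal, degree at least $D_{s+1}$ (the maximum degree only decreases), and destroys that many hitherto-untouched forest edges; summing over the $k-1$ removals in each of the $R$ super-steps,
\[ n - 1 \ \ge\ |E(F)| \ \ge\ (k-1)\sum_{s=0}^{R-1} D_{s+1} \ \ge\ (k-1)\sum_{s=0}^{R-1} \binom{R-s}{2} \ =\ (k-1)\binom{R+1}{3}, \]
which is of order $(2k-1)^3 t^3 / \big(6(k-1)^2\big)$. A short computation, using $t \ge 2k-1 \ge 3$ so that $n < (t+1)^3 \le (4/3)^3 t^3$, shows this contradicts $t^3 \le n$, completing the argument. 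The step I expect to be the main obstacle is the first one: making the simultaneous ``pull-down'' of $k$ vertices fully rigorous --- controlling the shared neighbours and the small-$d_{k,j}$ degenerate cases so that $f_k(G) \le j + \Phi_k(G_j)$ genuinely holds. Once that reduction is secured, the super-step telescoping and the edge count are routine, and the constant is not even tight (the crude estimate already gives roughly $6^{1/3}(k-1)^{2/3} n^{1/3}$), which is precisely the slack that the hypothesis $n^{1/3} \ge 2k-1$ comfortably absorbs.
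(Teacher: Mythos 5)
Your overall engine---the greedy peeling in super-steps of $k-1$, the telescoped quadratic lower bound $D_s > \binom{R-s+1}{2}$, and the edge count $n-1 \ge (k-1)\sum_s D_{s+1}$---is sound and is genuinely different from the paper's proof (which partitions the degree range into static intervals of width $n^{1/3}$). But the step you yourself flag as the main obstacle is a real gap, and the sentence you use to close it is false. You need $f_k(G_j) \le \Phi_k(G_j)$ for \emph{every} $j \le L$, and in the regime where $d_{k,j}$ is small your justification is that ``when $d_{k,j}$ is tiny $\Phi_k(G_j)$ is already small''. This is simply not true: a star $K_{1,m}$ with $k=3$ has $d_k=1$ but $\Phi_k = m-1$, arbitrarily large (there the conclusion $f_k \le \Phi_k$ survives only via the escape clause $|H|<k$, which is an accident of the example, not an argument). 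The pull-down of $v_1,\dots,v_{k-1}$ to the value $d_k$ by deleting private neighbours requires each $v_i$ to have at least $d_i - d_k$ of them; in a forest each other vertex of $A$ blocks at most one neighbour of $v_i$ (an edge $v_iv_j$ plus a common neighbour gives a triangle, two common neighbours give a $4$-cycle), so $v_i$ has at least $d_i-(k-1)$ private neighbours, and the pull-down is guaranteed only when $d_{k,j} \ge k-1$ (the paper's Lemma \ref{lemma_tree_2} yields the weaker threshold $d_{k,j}\ge 2k-1$, which is exactly why the hypothesis $n^{1/3}\ge 2k-1$ appears there). Late in the peeling $d_{k,j}$ can certainly fall below any such threshold, and your contradiction hypothesis only gives $f_k(G_j) > L-j$, which is far too weak at those late stages to rule this out; so the chain ``$\Phi_k(G_j) > L-j$ for all $j \le L$'' is unjustified precisely where it is needed.

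The gap is repairable, which is worth recording because the rest of your argument then goes through. If $d_{k,j}\le k-2$, delete the $k-1$ vertices of largest degree in $G_j$ and apply Lemma \ref{lemma_tree_1} to the remainder (maximum degree $\le k-2$), giving $f_k(G_j)\le (k-1)+(k-1)(k-2)=(k-1)^2$; under $f_k(G_j)>L-j$ this forces $j > L-(k-1)^2$. Hence $d_{k,j}\ge k-1$, and with it $\Phi_k(G_j) > L-j$, does hold for all $j \le L-(k-1)^2$, but you must truncate the super-steps at $S=R-(k-1)$ instead of $R$. The telescoping then gives $D_s > \binom{R-s+1}{2}-\binom{k}{2}$ and the edge count acquires corrections of order $k^3t$, which your slack (roughly a factor $(2k-1)^3/\bigl(6(k-1)^2\bigr)$ against $n<(t+1)^3\le (64/27)t^3$) does absorb for every $k\ge2$, although for $k=2,3$ one must keep the exact binomial coefficients rather than the crude cubic estimates. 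With that repair your proof is correct and is a different route from the paper's: the paper extracts the cubic threshold from a static partition of the degree range plus the $M(A)$ lemma, while you extract it dynamically from the forced quadratic growth of $\Delta(G_j)$ under peeling---at the price of having to control the degenerate low-degree endgame that the paper's interval hypothesis ($\deg(v_k)\ge n^{1/3}\ge 2k-1$) excludes automatically.
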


We first prove the following lemmas:

\begin{lemma} \label{lemma_tree_1}
For every $k\geq 2$ and every graph $G$ with maximum degree $\Delta$, $f_k(G) \leq (k-1)\Delta$.
\end{lemma}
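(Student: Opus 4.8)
The plan is to reduce the maximum degree of $G$ step by step, in \emph{rounds}, until at least $k$ vertices tie for the current maximum degree (or until fewer than $k$ vertices remain, which also satisfies the requirement in the definition of $f_k$). I would show that each round costs at most $k-1$ deletions and lowers the maximum degree by at least one; since the maximum degree is at most $\Delta$ to begin with, at most $\Delta$ rounds can occur, giving $f_k(G) \le (k-1)\Delta$.

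Concretely, suppose the current induced subgraph has maximum degree $M \ge 1$ realized by a set $A$ of vertices with $|A| = m < k$ (otherwise we are already done and incur no further cost). I would process the vertices of $A$ greedily: maintain a deletion set $D$, initially empty, and for each $a \in A$ in turn, if $a$ currently has no neighbour in $D$, add one arbitrary neighbour of $a$ to $D$. Since $M \ge 1$, every $a \in A$ has a neighbour, so this is always possible, and at most one vertex is added per element of $A$, whence $|D| \le m \le k-1$. After deleting $D$, every vertex of $A$ has lost at least one neighbour and so has degree strictly less than $M$, while no vertex can have gained degree; hence the new maximum degree is at most $M-1$. This constitutes one round.

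Iterating, the maximum degree strictly decreases after each round, so the number of rounds is at most $\Delta$, because the maximum degree takes distinct positive values $M \le \Delta$ at the rounds in which we act. If the process ever reaches maximum degree $0$, then either at least $k$ isolated vertices remain --- all realizing the maximum degree $0$ --- or fewer than $k$ vertices remain, and in both cases the terminal induced subgraph meets the condition. Summing over the at most $\Delta$ rounds, the total number of deleted vertices is at most $(k-1)\Delta$, as claimed.

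The points that need care --- and where a careless set-up could fail --- are twofold. First, one must verify that a single round really does drop the maximum degree: this is exactly why the set $D$ is chosen to \emph{cover} every current maximum-degree vertex by at least one deleted neighbour, and it does not matter if $D$ happens to contain other maximum-degree vertices or if two such vertices share a deleted neighbour, since overlaps only make $|D|$ smaller. Second, one needs the bound of $\Delta$ on the number of rounds, which follows from the strict decrease of the maximum degree. I expect the first point --- confirming that at most $k-1$ deletions suffice to lower the maximum in the presence of adjacencies among the maximum-degree vertices and of shared neighbours --- to be the main thing to pin down, although the considerable slack between the crude $(k-1)\Delta$ bound and the sharper values (such as $g(2,k)=k-1$) leaves ample room for this crude covering argument to go through.
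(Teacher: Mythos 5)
Your proof is correct, and its accounting is identical to the paper's: rounds costing at most $k-1$ deletions, each round lowering the maximum degree by at least one, hence at most $\Delta$ rounds and $(k-1)\Delta$ deletions in total. Where you genuinely differ is the deletion rule inside a round. The paper deletes \emph{all} the current maximum-degree vertices themselves (there are at most $k-1$ of them, or else we are already done): every surviving vertex then had degree at most $M-1$ to begin with, and deletions cannot raise degrees, so the maximum drops with no further argument --- no covering step, and no care needed about adjacencies inside $A$ or shared neighbours. You instead keep the maximum-degree vertices alive and delete a set $D$ of neighbours chosen so that every vertex of $A$ has a neighbour in $D$; this is also valid, and your treatment of the delicate points (overlaps only shrink $D$, and a vertex of $A$ that lands in $D$ is removed anyway, so it needs no degree drop) is sound, but it buys the same bound with strictly more verification. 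The paper also packages the iteration as induction on $\Delta$, which is equivalent to your explicit rounds. One mild payoff of your rule is that it lowers the degree of each maximum-degree vertex while keeping it in the graph --- the kind of controlled degree surgery the paper itself uses later, in the proof of Theorem \ref{forests}, when it deletes private neighbours to equate degrees; for this lemma, though, the paper's blunter rule is the shorter route.
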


\begin{proof}

By induction on $\Delta$.  If $\Delta=0$, then the result is trivial since either $|G| <k$ or there are $k$ vertices of degree 0.

So suppose the result holds for $\Delta=r$ and let $G$ have $\Delta=r+1$.  If there are $k$ vertices of maximum degree $r+1$ we are done.  Otherwise, remove all the vertices of maximum degree $\Delta$  in $G$ ---there are at most $k-1$ such vertices.  The resulting graph $H$ has maximum degree $r$ and hence by the induction hypothesis, $f_k(H) \leq (k-1)r$.  Hence \[f_k(G) \leq (k-1)r+k-1 = (k-1)(r+1)=(k-1)\Delta(G)\] as required.
\end{proof}

\begin{lemma} \label{lemma_tree_2}
Let $G$ be a forest and let $A$ be any subset of $k \geq 2$ vertices of $G$.  Define $M(A)$ to be the set of vertices of $V(G)\backslash A$ each having at least two neighbours in $A$.  Then $|M(A)|<|A|=k$.
\end{lemma}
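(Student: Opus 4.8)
The plan is to exploit the acyclicity of the forest through a double count of edges. First I would restrict attention to the bipartite graph $B$ whose two sides are $A$ and $M(A)$ and whose edges are precisely the edges of $G$ joining a vertex of $A$ to a vertex of $M(A)$. Since $B$ is a subgraph of the forest $G$, it contains no cycle, so $B$ is itself a forest on its $|A|+|M(A)|$ vertices. As a forest on $m$ vertices has at most $m-1$ edges, this yields
\[ e(B) \leq |A| + |M(A)| - 1. \]

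Next I would bound $e(B)$ from below using the defining property of $M(A)$. By definition every vertex of $M(A)$ has at least two neighbours lying in $A$, and each such neighbour contributes a distinct edge of $B$. Hence
\[ e(B) \geq 2\,|M(A)|. \]

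Combining the two inequalities gives $2|M(A)| \leq |A| + |M(A)| - 1$, that is $|M(A)| \leq |A| - 1 < |A| = k$, which is exactly the desired conclusion. The degenerate case $M(A) = \emptyset$ is immediate since $k \geq 2$.

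I do not anticipate any genuine obstacle here: the only ingredients are that a subgraph of a forest is again a forest and that a forest on $m$ vertices has at most $m-1$ edges, both standard. The one point I would state carefully is that the two guaranteed neighbours in $A$ for each vertex of $M(A)$ yield genuinely distinct edges, so that the lower bound $e(B) \geq 2|M(A)|$ holds; this is clear since the graph is simple. It is worth noting that the bound is tight: taking $G$ to be a path on $2k-1$ vertices and letting $A$ be its $k$ vertices of odd position gives $|M(A)| = k-1$.
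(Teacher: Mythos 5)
Your proof is correct and uses essentially the same argument as the paper: both count edges in the bipartite graph between $A$ and (a subset of) $M(A)$, bounding them below by $2|M(A)|$ via the degree condition and above via acyclicity. The only cosmetic difference is that you argue directly with the forest edge bound $e(B) \leq |A|+|M(A)|-1$, whereas the paper argues by contradiction, taking a $k$-subset of $M(A)$ and forcing a cycle on $2k$ vertices with $2k$ edges.
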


\begin{proof}
Suppose $|M(A)| \geq |A|=k$.  Let $B$ be any subset of $M(A)$ of cardinality $k$ and let $H$ be the bipartite graph with vertices $A \cup B$ and only those edges connecting vertices in $A$ to vertices in $B$.  Since each vertex of $B$ has degree at least 2 in $H$, $|E(H)| \geq 2k$.  But $|V(H)|=2k$, therefore $H$ has a cycle contradicting the fact that $G$ is a forest.
\end{proof}

We now prove Theorem \ref{forests}.

\begin{proof}

The degree of $F$ can range from $0$ to $n-1$.  Let us divide this range into subintervals \[S_j=[jn^{\frac{1}{3}}, (j+1)n^{\frac{1}{3}} ) \mbox{ for } j=0,\ldots,\lfloor n^{\frac{1}{3}} \rfloor-1\] with the last two intervals being $S_{\lfloor n^{\frac{1}{3}} \rfloor}=[ \lfloor n^{\frac{1}{3}} \rfloor n^{\frac{1}{3}}, n^{\frac{2}{3}})$ and $S_{L}=[\lceil n^{\frac{2}{3}} \rceil ,n)$.  Let us denote by $A_j$ or $A_L$ the set of vertices of $F$ whose degrees fall in the intervals $S_j$ or $S_L$ respectively.

%  Note that for any vertex in $A_j$, $j < n^{\frac{1}{3}}$,  its degree is bound by $jn^{\frac{1}{3}}$ and $(j+1)n^{\frac{1}{3}}$ %%($n^{\frac{2}{3}}$ and $n$).

We first claim that $A_L$ contains at most $\lfloor n^{\frac{1}{3}} \rfloor$ vertices.  Suppose not and let $x_j$ be the number of vertices of $F$ having degree $j$.  Consider the forest $F^*$ on $n^* \leq n$ vertices obtained by deleting all isolated vertices in $F$.  Then clearly the number of vertices $x_j$ of degree $j \geq 1$ in $F^*$ in the same as in $F$, and we have $x_1+\ldots+x_{n-1}=n^*$ and $x_1+2x_2+\ldots+(n-1)x_{n-1} \leq 2n^*-2$.  Multiplying the first equation by 2 and subtracting the second gives: \[x_1-\sum_{j=3}^{n-1}(j-2)x_j \geq 2.\]  Hence \[x_1 \geq 2 + \sum_{j \geq 3}(j-2)x_j.\]

In particular \[x_1 \geq 2+(n^{\frac{2}{3}}-2)|A_L\]\[\geq 2(n^{\frac{2}{3}}-2)(n^{\frac{1}{3}}+1)=n+n^{\frac{2}{3}}-2n^{\frac{1}{3}}.\]

But \[n^* \geq x_1+|A_L| \geq n+n^{\frac{2}{3}}-2n^{\frac{1}{3}}+n^{\frac{1}{3}}+1=n+n^{\frac{2}{3}}-n^{\frac{1}{3}}+1>n^*\]
a contradiction.

We now proceed as follows.  We remove from $F$ the vertices in $A_L$ and redistribute the resulting degrees among the intervals $S_j$, $j=0,\ldots,\lfloor n^{ \frac{1}{3}} \rfloor$, recalculating  $A_ j$ for $j=0,\ldots,\lfloor n^{\frac{1}{3}} \rfloor$.  If there are at least $k$ vertices with degrees in the last interval we stop.  Otherwise we remove these vertices and again we redistribute the degrees among the intervals $S_j$, $j=0,\ldots, \lfloor n^{\frac{1}{3}} \rfloor-1$, recalculating  $A_ j$ for $j=0,\ldots,\lfloor n^{\frac{1}{3}} \rfloor-1$.  This process continues until we reach one of the following possibilites:
\begin{enumerate}
\item{We have deleted all vertices and we are left with only those vertices in $A_0$;}
\item{For some $j \geq 1$, $A_j$ contains at least $k$ vertices.}
\end{enumerate}

We consider these cases separately:
\begin{enumerate}
\item{  In this case we have deleted $\lfloor n^{\frac{1}{3}} \rfloor $ vertices from $A_L$ and at most $(k-1)\lfloor n^{\frac{1}{3}} \rfloor $ further vertices by deleting at most $(k-1)$ vertices that were in the respective sets $A_1,\ldots,A_{\lfloor n^{\frac{1}{3}} \rfloor }$ at each stage.  So altogether $k\lfloor n^{\frac{1}{3}} \rfloor $ vertices have been deleted.

But now the resulting graph has maximum degree at most $\lfloor n^{\frac{1}{3}} \rfloor $ and therefore, by Lemma \ref{lemma_tree_1}, by deleting at most a further $(k-1)\lfloor n^{\frac{1}{3}} \rfloor $ vertices we arrive at a graph with $k$ vertices of maximum degree (or at most $k-1$ vertices at all).  To do this we have altogether deleted at most $(2k-1)\lfloor n^{\frac{1}{3}} \rfloor $ vertices, as required.}
\item{We have stopped the deletion process when $A_j$, $j \geq 1$, contains at least $k$ vertices, $A_j$ being the set of vertices of the reduced forest having degrees in $S_j=[jn^{\frac{1}{3}},(j+1)n^{\frac{1}{3}})$.

Let $v_1,v_2,\ldots,v_k$ be the $k$ vertices in $A_j$ of largest degrees, say $d_1 \geq d_2\geq \ldots \geq d_k$.  Let us call this set of vertices $A$.   By Lemma \ref{lemma_tree_2}, $|M(A)|<k$ where we recall that $M(A)$ is the set of vertices adjacent to at least two vertices of $A$.  Since a vertex $v \in A$ can be adjacent to at most $k-1$ other vertices in $A$ and $k$ vertices in $M(A)$, there are at least $\deg(v)-2k+1$ vertices that are neighbours of $v$ but which are not in $A \cup M(A)$.  Since such vertices are adjacent to at most one vertex from $A$, these $\deg(v)-2k+1$ vertices are only adjacent to $v \in A$ and not to any other vertex in $A$.  Let $B(v)$ be the set of these neighbours of $v$.

Now consider any vertex $v_i \in A$, $i=1 \ldots k$.  Suppose $\deg(v_i)=\deg(v_k)+t_i$.  Then \[|B(v_i)| \geq \deg(v_i) - 2k+1 \geq \deg(v_k)+t_i-2k+1 \geq n^{\frac{1}{3}}+t_i-2k+1 \geq t_i\] since $n^{\frac{1}{3}} \geq 2k-1$.

We therefore need to remove $t_i$ vertices of $B(v_i)$ (and this will not change the degree of any other vertex in $A$) in order to equate $\deg(v_i)$ and $\deg(v_k)$.  However, since $|B(v_i)| \geq t_i$, we can do this.

Hence, equating all the degrees of the vertices $v_1,\ldots,v_{k-1}$ to $\deg(v_k)$ can be done at the cost of  deleting  at most a further $(k-1)\lfloor n^{\frac{1}{3}} \rfloor $ vertices.  This means that we have deleted altogether at most $(2k-1)\lfloor n^{\frac{1}{3}} \rfloor $ vertices, so we are done.}
\end{enumerate}
\end{proof}

Remark:
The above proof also works for the more general class of graphs without even cycles.  Lemma \ref{lemma_tree_2} remains unchanged since the graph $H$ used in the proof is bipartite by construction.  A graph $G$ on $n$ vertices and without even cycles contains at most $\frac{3n}{2}$ edges \cite{bollobas2004extremal}.  Therefore, in the proof of the Theorem, instead of the computation involving $x_1$ we compute an upperbound on the number of vertices in $A_J$, by noting that if this number is at least $3n^{\frac{1}{3}}+1$ then \[3n \geq 2|E(G)| \geq \sum_{v \in A_J} \geq (3n^{\frac{1}{3}}+1)(n^{\frac{2}{3}}) = 3n + n^{\frac{2}{3}}>3n,\] a contradiction.

We then remove the vertices of $A_J$ and redistribute the resulting degrees, sacrificing at most $3n^{\frac{1}{3}}$ vertices, and continue as in the proof.  This gives \[f_k(G) \leq (2k+1)n^{\frac{1}{3}},\] giving a weaker bound for a more general class of graphs.

\section{ The functions $g(\Delta,k)$ and $h(\Delta,k)$}
 
Lemma \ref{lemma_tree_1}, which states that $g(\Delta,k) \leq ( k-1)\Delta$, plays a crucial rule in the proof  of Theorem \ref{forests}.  Another motivation to study $g(\Delta,k) =  \max\{ f_k(G): \Delta(G)  \leq \Delta \}$ comes from the Proposition \ref{prop1} below,  which gives a weak support for the conjecture $f_k(G) =< f(k) \sqrt{|G|}$ mentioned in the introduction,  and also demonstrates that for graphs with $e(G) = o(n^2)$,  $f_k(G) = o(n)$  (where $e(G)$  is the number of edges of $G$) .   
 
Observe that it has not yet been proved in general  that for fixed $k$ and $G$ a graph on $n$ vertices,  $f_k(G) = o(n)$.
 
\begin{proposition} \label{prop1}
 
Suppose $G$ is a graph on $n$ vertices and $e(G) \leq cn^{1+\beta}$ where $0 \leq \beta < 1$ and let $\alpha= \frac{1+ \beta}{2}$.  
Then $f_k(G)  \leq  (k - 1 +2c)n^{\alpha}$,  and in particular for $\beta= 0$,  $f_k(G) \leq (k-1+2c) \sqrt{n}$.
 \end{proposition}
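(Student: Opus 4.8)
The plan is to split $V(G)$ into vertices of high degree and vertices of low degree, delete the high-degree vertices outright, and then invoke Lemma \ref{lemma_tree_1} on what remains. The natural threshold is $n^\alpha$: I would let $S$ be the set of vertices whose degree \emph{exceeds} $n^\alpha$, delete $S$ first, and bound $|S|$ using the edge count. The two contributions---the size of $S$ and the cost of equating degrees in $G-S$---are engineered to have the same order, which is exactly what the choice $\alpha = \frac{1+\beta}{2}$ guarantees.

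First I would estimate $|S|$. Since each vertex of $S$ has degree strictly greater than $n^\alpha$, we have $|S|\, n^\alpha < \sum_{v\in S}\deg(v) \le \sum_{v} \deg(v) = 2e(G) \le 2cn^{1+\beta}$. Because $\alpha = \frac{1+\beta}{2}$ gives $1+\beta-\alpha = \alpha$, this rearranges to $|S| < 2cn^\alpha$. This is the only place the hypothesis $e(G)\le cn^{1+\beta}$ is used, and the calibration of $\alpha$ is precisely what makes it match the second step below.

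Next, let $H = G - S$ be the induced subgraph on the surviving vertices. Every vertex of $H$ already had degree at most $n^\alpha$ in $G$, and deleting vertices can only decrease degrees, so $\Delta(H) \le n^\alpha$; since Lemma \ref{lemma_tree_1} uses the integer quantity $\Delta(H)$, I would just note $\Delta(H)\le \lfloor n^\alpha\rfloor \le n^\alpha$. Applying the lemma to $H$ then yields $f_k(H) \le (k-1)\Delta(H) \le (k-1)n^\alpha$.

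Finally I would combine the two steps. If $T \subseteq V(H)$ is a set of at most $f_k(H)$ vertices whose deletion leaves an induced subgraph of $H$ (hence of $G$) with at least $k$ vertices realising its maximum degree, or with fewer than $k$ vertices in total, then $S \cup T$ witnesses the same property in $G$. Hence $f_k(G) \le |S| + f_k(H) < 2cn^\alpha + (k-1)n^\alpha = (k-1+2c)n^\alpha$, and the case $\beta = 0$ gives the stated $\sqrt{n}$ bound. I do not anticipate a genuine obstacle: the argument is a clean two-stage deletion, and the only point requiring care is the degree/edge calibration in the bound on $|S|$, which the definition of $\alpha$ resolves automatically.
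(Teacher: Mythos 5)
Your proposal is correct and follows essentially the same argument as the paper: a threshold decomposition at degree $n^{\alpha}$, an edge-counting bound showing the high-degree set has size at most $2cn^{\alpha}$, and an application of Lemma \ref{lemma_tree_1} to the remaining graph of maximum degree at most $n^{\alpha}$. The only difference is your use of a strict threshold ($\deg(v) > n^{\alpha}$ versus the paper's $\deg(v) \geq n^{\alpha}$), which is immaterial.
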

\begin{proof}
Define $V_{\alpha} = \{  v:  \mbox{  }\deg(v)  \geq  n^{\alpha} \}$ and suppose $|V_{\alpha})|  > 2cn^\alpha$.
 
Then \[2e(G)  = \sum \{\deg(v) : v \in  V(G)\} \geq  \sum \{ \deg(v): v \in V_{\alpha} \} > n^{\alpha}2cn^{\alpha} = 2cn^{2\alpha} =  2cn^{1+\beta} \geq 2e(G),\]  a contradiction.
 
Hence $|V_{\alpha}| \leq 2cn^{\alpha}$.  Delete $V_{\alpha}$ we get a graph $H$ with $\Delta(H) \leq n^{\alpha}$.  Hence applying Lemma \ref{lemma_tree_1} we get \[f_k(G) \leq |V_{\alpha}| +(k-1)\Delta(H) \leq 2cn^{\alpha} + (k-1)n^{\alpha}  = (k -1 +2c) n^{\alpha}.\]
 
\end{proof}

So a better knowledge of the behavior of $g(\Delta,k)$ will  help to obtain better bound on $f_k(G)$ as well as $f(n,k) =  \max \{ f_k(G) : |G|= n \}$.       
 
\begin{proposition} \label{gDelta}
For every $\Delta \geq 0$ and $k \geq 2$,  
 \begin{enumerate}
\item{$g(0,k) = 0$.}
\item{$g(1,k) = \lfloor\frac{k-1}{2 } \rfloor$.}
\item{For $\Delta \geq 1$,  $g(\Delta,2)  =  \left \lceil \frac{-3+\sqrt{8\Delta+1}}{2} \right \rceil$.}
\end{enumerate}
\end{proposition}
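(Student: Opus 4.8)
The plan is to dispatch parts (i) and (iii) quickly from material already in hand and to concentrate effort on part (ii). For part (i), a graph with $\Delta(G)\le 0$ is edgeless, so every vertex is a $0$-vertex; hence either $k$ of them already realise $\Delta(G)=0$ or $|G|<k$, and in either case $f_k(G)=0$, so $g(0,k)=0$. For part (iii) I would combine Theorem~\ref{theoremdelta} with a monotonicity remark: the bound $f(G)\le\left\lceil\frac{-3+\sqrt{8\Delta+1}}{2}\right\rceil$ from Theorem~\ref{theoremdelta}(iii) is nondecreasing in $\Delta$, so every $G$ with $\Delta(G)\le\Delta$ obeys it, giving $g(\Delta,2)\le\left\lceil\frac{-3+\sqrt{8\Delta+1}}{2}\right\rceil$; the reverse inequality is furnished by the sharpness construction in the proof of Theorem~\ref{theoremdelta}(ii), which realises $f(G)=t=\left\lceil\frac{-3+\sqrt{8\Delta+1}}{2}\right\rceil$ for each admissible $\Delta$, forcing equality.

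The substance lies in part (ii). First I would record that any $G$ with $\Delta(G)\le 1$ is a disjoint union of $m$ copies of $K_2$ and $s$ isolated vertices, so every induced subgraph $H$ has $\Delta(H)\in\{0,1\}$. There are then only two ways to satisfy the $k$-equating condition. Keeping $\Delta(H)=1$ requires at least $k$ vertices of degree $1$; since these come in pairs, this needs $\lceil k/2\rceil$ surviving edges, hence $m\ge\lceil k/2\rceil$, achievable at cost $0$. Forcing $\Delta(H)=0$ instead requires destroying every edge, whose minimum cost is a vertex cover of $mK_2$, namely $m$; the survivors are then all $0$-vertices and the condition holds. Thus $f_k(G)=0$ if $m\ge\lceil k/2\rceil$ and $f_k(G)\le m$ otherwise.

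From this dichotomy both bounds follow. For the upper bound, $m<\lceil k/2\rceil$ forces $m\le\lceil k/2\rceil-1=\lfloor(k-1)/2\rfloor$, so $f_k(G)\le\lfloor(k-1)/2\rfloor$ for every such $G$, and hence $g(1,k)\le\lfloor(k-1)/2\rfloor$. For sharpness I would take $G=\lfloor(k-1)/2\rfloor\,K_2$ together with enough isolated vertices that deleting down to fewer than $k$ vertices is never cheaper than destroying the edges; then the $\Delta(H)=1$ route fails for lack of degree-$1$ vertices, the $|H|<k$ escape is too expensive, and the only economical option costs exactly $m=\lfloor(k-1)/2\rfloor$, so $f_k(G)=\lfloor(k-1)/2\rfloor$.

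The main obstacle I anticipate is making the part~(ii) case analysis genuinely exhaustive: one must verify that once $m<\lceil k/2\rceil$ no induced subgraph retaining an edge can meet the condition (any such $H$ has at most $2m<k$ vertices of degree $1$ unless $|H|<k$), and that the cheapest route to $\Delta(H)=0$ really is a minimum vertex cover of the matching, of size $m$. Confirming that adjoining sufficiently many isolated vertices makes the $|H|<k$ shortcut strictly worse is the last routine point needed to pin down the extremal value.
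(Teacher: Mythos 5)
Your proposal is correct and follows essentially the same route as the paper: part (i) by the trivial degree-$0$ observation, part (iii) by citing Theorem~\ref{theoremdelta} (the bound, made uniform over $\Delta(G)\leq\Delta$ by monotonicity, plus the star-union sharpness construction), and part (ii) by viewing $G$ as a matching plus isolated vertices, deleting one endpoint of each surviving edge to reach maximum degree $0$, with $\lfloor\frac{k-1}{2}\rfloor K_2$ together with isolated vertices as the extremal example. Your sharpness analysis in part (ii) is in fact slightly more careful than the paper's, which asserts that $tK_2\cup mK_1$ works for every $m\geq 0$ and overlooks that for small $m$ (e.g.\ $m=0$) the graph has fewer than $k$ vertices and hence $f_k=0$; your requirement of sufficiently many isolated vertices, so that shrinking below $k$ vertices is never cheaper than covering the edges, is the correct formulation.
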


\begin{proof}
\begin{enumerate}
\item{Clearly if $G$ is a graph with maximum degree $\Delta =  0$  then either $|G| \geq k$ and we are done, or else $|G| \leq k-1$ and we are done by the definition of $f_k(G)$,  hence $g(0,k) = 0$.}
\item{Consider $G$  with maximum degree $\Delta(G) = 1$.  If there are already $k$ vertices of degree 1 we are done.  So assume there are at most $k-1$ vertices of degree 1. By  parity these vertices form exactly $ \lfloor \frac{ k-1}{2 } \rfloor$  isolated edges containing exactly $2 \lfloor\frac{k-1}{2 } \rfloor$ vertices of degree 1.  We delete from each isolated edge one vertex of degree 1  to get an induced subgraph $H$ with $\Delta(H) = 0$.  It follows, since $g(0,k)=0$, that $f_k(G) \leq  \lfloor \frac{ k-1}{2 } \rfloor$. This bound is sharp as demonstrated by the graph $tK_2  \cup mK_1$  where $m \geq 0$  and $t =\lfloor \frac{ k-1}{2 } \rfloor$, $ k \geq 3$.   }
\item{This is a restatement of Theorem \ref{theoremdelta}.}
\end{enumerate}
 \end{proof}

Determining $g(2,k)$  requires more efforts, in particular we will use Ore's observation that if $G$ is a graph on $n$ vertices  without isolated vertices, then the domination number of $G$, denoted $\gamma(G)$,   satisfies  $\gamma(G) \leq \lfloor \frac{n}{2 } \rfloor$ \cite{ore1962theory}.
 
\begin{theorem}
 
For $k \geq 2$,  $g(2,k) = k-1$.
 \end{theorem}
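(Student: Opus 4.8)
The plan is to prove the two inequalities $g(2,k)\ge k-1$ and $g(2,k)\le k-1$ separately, throughout using the fact that a graph with $\Delta\le 2$ is a disjoint union of paths, cycles and isolated vertices. For the lower bound I would exhibit the single extremal graph $G=(k-1)P_3$, the disjoint union of $k-1$ paths on three vertices. It has $n=3(k-1)\ge k$ vertices but only $k-1$ vertices of degree $2$, so $\Delta=2$ cannot be equated: there are fewer than $k$ vertices of degree $2$ and deletion never creates new ones. Thus in any admissible induced subgraph $H$ we have $\Delta(H)\le 1$, and since the components are disjoint and each $P_3$ contains a degree-$2$ vertex that must be removed or have a neighbour removed, at least one deletion per component is forced, giving $f_k(G)\ge k-1$. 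Conversely, deleting one endpoint of each $P_3$ leaves $k-1$ independent edges, i.e.\ $2(k-1)\ge k$ vertices of degree $1$, so $f_k(G)=k-1$.

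For the upper bound I first dispose of the easy cases: if $|G|<k$ or if $G$ already has at least $k$ vertices of degree $2$ then $f_k(G)=0$, and since trivially $f_k(G)\le n-k+1$, the range $n\le 2k-2$ is also settled. Hence I may assume $n\ge 2k-1$ together with at most $k-1$ vertices of degree $2$; then at least $k$ vertices have degree $\le 1$, and all degree-$2$ vertices lie inside the path-interiors and cycles, whose vertices number at most $k-1$ in total. Write $G'$ for the union of the non-trivial components (paths and cycles). The conceptual reason maximum degree $\le 1$ is reachable cheaply is that deleting a \emph{dominating set} of $G'$ forces every surviving vertex to lose a neighbour, hence to have degree $\le 1$; this is exactly the point at which Ore's observation $\gamma(G')\le\lfloor |G'|/2\rfloor$ is available, although for the budget one needs the sharper count below.

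I would then run a dichotomy between two targets. Target A makes the maximum degree equal to $1$: treating each component by its minimum dissociation reduces $G$ to maximum degree $\le 1$ using at most (number of degree-$2$ vertices)$\,\le k-1$ deletions, and this reduction can be chosen to leave exactly $2\nu_I(G')$ vertices of degree $1$, where $\nu_I$ denotes the induced-matching number. Target B makes the maximum degree equal to $0$: deleting a minimum vertex cover of $G'$ (cost $\tau(G')$) leaves $\alpha(G)=n-\tau(G')$ isolated vertices. If $2\nu_I(G')\ge k$, then Target A already equates $k$ degrees within budget. Otherwise I invoke the per-component inequality $\tau(c)\le \nu_I(c)+\tfrac12\,(\text{number of degree-}2\text{ vertices of }c)$, valid for every path and cycle, to get $\tau(G')\le \nu_I(G')+\tfrac12 n_2\le \tfrac{k-1}{2}+\tfrac{k-1}{2}=k-1$; since $n\ge 2k-1$ this also yields $\alpha(G)=n-\tau(G')\ge k$, so Target B wins within budget.

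The main obstacle is establishing that at least one of the two targets always stays within the budget $k-1$, i.e.\ proving the per-component inequality relating $\tau$, $\nu_I$ and the degree-$2$ count. The delicate instances are the short components $P_2$, which contributes to the vertex cover while having no degree-$2$ vertex, and especially $C_5$, whose vertex-cover number exceeds twice its induced-matching number; it is precisely the global constraint $n_2\le k-1$, weighted by the factor $\tfrac12$ on the degree-$2$ count, that absorbs these exceptions. Everything else reduces to routine component-wise bookkeeping on the dissociation, vertex-cover and induced-matching numbers of paths and cycles.
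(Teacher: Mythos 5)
Your proposal is correct, but it takes a genuinely different route from the paper's proof. Both arguments use the same extremal graph $(k-1)K_{1,2}$ for the lower bound. For the upper bound, however, the paper partitions the components into isolated vertices and edges, copies of $K_{1,2}$, and everything else; it then deletes a minimum dominating set of the subgraph induced by the degree-$2$ vertices of the larger components (bounded via Ore's theorem $\gamma \le \lfloor n/2 \rfloor$, applicable because that subgraph has no isolated vertices), which drops every surviving degree-$2$ vertex to degree at most $1$, and closes the argument with its previously computed values $g(1,k)=\lfloor \frac{k-1}{2}\rfloor$ and $g(0,k)=0$, via a case split on the number $t$ of $K_{1,2}$-components. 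You instead first reduce to $n\ge 2k-1$ and $n_2\le k-1$ (using the trivial $f_k(G)\le n-k+1$), and then run a dichotomy on the induced matching number of the union $G'$ of nontrivial components: if $2\nu_I(G')\ge k$ you equate at degree $1$ within the budget $n_2$, and otherwise you delete a minimum vertex cover, whose size you control by the per-component inequality $\tau(c)\le \nu_I(c)+\frac{1}{2}n_2(c)$, leaving $n-\tau(G')\ge k$ isolated vertices. I checked this inequality against the closed formulas $\tau(P_m)=\lfloor m/2\rfloor$, $\nu_I(P_m)=\lfloor (m+1)/3\rfloor$, $\tau(C_m)=\lceil m/2\rceil$, $\nu_I(C_m)=\lfloor m/3\rfloor$: it holds for every path and cycle, with equality at $P_2$ and $P_4$, and $C_5$ is indeed the component that defeats the naive bound $\tau\le 2\nu_I$; your Target A budget claim also holds, since keeping only a maximum induced matching in each component of $G'$ costs at most $n_2(c)$ deletions per component. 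What the paper's route buys is economy and reuse: it recycles $g(0,k)$, $g(1,k)$ and Ore's bound, and the identical decomposition reappears in its determination of $h(2,k)$ for even $k$; it also needs no lower bound on $n$, since the recursive use of $g(1,k)$ absorbs the $|H|<k$ escape clause. What your route buys is a cleaner, self-contained dichotomy exploiting the explicit structure of maximum-degree-$2$ graphs through classical invariants with known formulas, at the cost of the upfront reduction to $n\ge 2k-1$ and the component-wise bookkeeping that you flag as routine (and which indeed is).
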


\begin{proof} 
The graph $G =  (k-1) K_{1,2}$  ($ k-1$  vertex-disjoint  copies of the star $K_{1,2}$)  has $f_k(G) = k-1$ as is easily checked.  So $g(2,k)  \geq k-1$.
 
Let us prove the converse.
 
Consider a graph $G$ with $\Delta(G) = 2$,  otherwise by Proposition  \ref{gDelta}  (part 2) we are done.
 
Let $ n_2  =  |\{  v : \deg(v) =  2\} |$.  Clearly if $n_2  \geq k$ we are done so we may assume $1 \leq n_2 \leq k-1$.
 
We collect  the (possible)  components of $G$ into three  subgraphs:  $A =  \{$ all isolated vertices  and isolated edges$\}$, $B = \{$all copies of $K_{1,2 }\}$,  $C  = \{$all other components$\}$.
 
We denote by $t$ the number of copies of $K_{1,2}$  in $B$  and we observe that  $t \leq n_2$ and that in each component in $C$ the vertices of degree 2 induce either a path  (including a single edge)  or a cycle.
 
We claim that if $t > \lfloor \frac{k-1}{2 } \rfloor$ we are done by deleting all $n_2-t$ vertices of degree 2 in $C$ and from each copy   of $K_{1,2}$ in $B$  we delete a leaf  to get  from $G$ an induced subgraph $H$ with $\Delta(H) = 1$ and with at least $2 (\lfloor \frac{k-1}{2 } \rfloor + 1)  \geq  k$ vertices of degree 1,  and we have deleted altogether $n_2 =< k-1$  vertices.   So we shall assume  $t \leq  \lfloor \frac{k-1}{2 } \rfloor$.
 
Consider the subgraph $F$  induced by the vertices of degree 2 in $C$. 
 
\medskip 

\noindent Case  1:  $|F| = 0$.
 
If  $|F| = 0$  (namely $C$ is empty)  then $n_2 = t  \leq \lfloor\frac{k-1}{2 } \rfloor$.   Delete a leaf from each copy of $K_{1,2}$ in $B$.  We get from  $G$ a graph $H$ with  $\Delta(H) = 1$ (as in $A$ all components have maximum degree at most 1).
 
If in $H$ there are already $k$ vertices of degree 1, we are done as we have deleted  $t \leq   \lfloor \frac{k-1}{2 }  \rfloor$ vertices.     Otherwise by Proposition  \ref{gDelta} (part 2), $ f_k(H) \leq \lfloor \frac{k-1}{2 } \rfloor$  and hence $f_k(G)  \leq   2 \lfloor \frac{k-1}{2 } \rfloor \leq k-1$.    
  \medskip

\noindent Case 2:  $|F| > 0$.

Then as we have noted before, due to the components of $C$,  there are no isolated vertices in $F$,  and by  Ore's result $\gamma(F) \leq \lfloor\frac{n_2 - t }{2 } \rfloor \leq  \lfloor \frac{k- 1 - t}{2 } \rfloor$.
 
Let $D$ be a dominating set for $F$ that realises $\gamma(F)$, hence $|D| \leq \frac{n_2 - t}{2}$.
 
Delete $D$ and consider the induced subgraph $H$ on  $A \cup C$.  Clearly $\Delta(H) \leq 1$ and denote by $n_1$ the number of vertices of degree 1 in $H$.
 
Now we look again at $B$.  

\medskip
 
\noindent Case 1: $ t  = 0$.
 
Since $t = 0$,  $B$ is empty, and either $n_1  \geq k$ and we are done as we have deleted $|D| =\leq \lfloor \frac{n_2}{2} \rfloor \leq  \lfloor \frac{k-1}{2 } \rfloor$ vertices  or  by Proposition  \ref{gDelta} (part 2),  $f_k(G) \leq f_k( H)+ |D| \leq  2 \lfloor\frac{k-1}{2 } \rfloor \leq k-1$.
 
\medskip

\noindent Case  2:  $1 \leq t \leq \lfloor \frac{k-1}{2 }  \rfloor$.
 
\begin{enumerate}
\item{if $n_1 \geq  k - 2t$ then deleting a leaf  from every copy of $K_{1,2}$  in $B$,  we get  an induced graph  $H^*$ on  $A \cup B \cup C$  (extending $H$ to  the leftover of $B$) with $\Delta(H^*) = 1$ and at least $k- 2t +2t = k$ vertices of degree 1 and we are done as we have deleted altogether \[|D| + t  \leq \frac{n_2 - t }{2 }+t  =  \frac{n_2 +t}{2} \leq n_2 \leq k-1\] vertices.}
\item{If $n_1 \leq k - 1 -2t$  (recall  $n_1$ is the number of vertices of degree 1 in $H$ formed from $A  \cup  \{C \backslash D\} $), then we delete $\frac{n_1}{2}$ independent  vertices of degree 1 in  $H$,  and $t$ vertices of degree 2 in $B$ to get an induced subgraph $H^*$ with $\Delta(H^*) = 0$.
 
But $g(0,k) = 0$  hence $f_k(H^*) = 0$  and \[f_k(G)  \leq  \frac{n_1}{2} + |D| + t \leq  \frac{k-1-2t}{2} + \frac{k-1-t}{2} + t = \frac{2k - 2 - t}{2} \leq k-1\] and the proof is complete.}
\end{enumerate}
 
\end{proof}

The following construction supplies a lower bound for $g(\Delta,k)$ in terms of $g(\Delta,2)$.

\begin{proposition}
For even $k \geq 2$, $g(\Delta,k) \geq g(\Delta,2) \frac{k}{2} + \frac{k}{2}-1$.

\end{proposition}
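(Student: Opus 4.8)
The plan is to produce a single graph $G^{\ast}$ with $\Delta(G^{\ast})=\Delta$ and $f_k(G^{\ast})\ge g(\Delta,2)\frac k2+\frac k2-1$, which is exactly what a lower bound on $g(\Delta,k)$ requires. Write $s=g(\Delta,2)$ and $m=\frac k2$, and assume $\Delta\ge 2$ (the cases $\Delta\le 1$ are immediate from Proposition \ref{gDelta}). Recall from the sharpness part of Theorem \ref{theoremdelta} that $g(\Delta,2)$ is attained by the disjoint union of stars $G_0=K_{1,a_1}\cup\cdots\cup K_{1,a_{s-1}}\cup K_{1,\Delta}$, where $a_j=\binom{j+1}{2}+1$; thus $G_0$ has exactly $s$ stars, whose centres carry the distinct degrees $a_1<a_2<\cdots<a_{s-1}<\Delta$, with consecutive gaps $a_{j+1}-a_j=j+1$. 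Writing $X^{(r)}$ for $r$ disjoint copies of $X$, I would take
\[
G^{\ast}=K_{1,a_1}^{(m)}\cup\cdots\cup K_{1,a_{s-1}}^{(m)}\cup K_{1,\Delta}^{(k-1)},
\]
i.e.\ $m$ copies of $G_0$ together with $\frac k2-1$ further copies of the big star. Then $\Delta(G^{\ast})=\Delta$, and the number of star centres is $m(s-1)+(k-1)=m(s+1)-1=:V=g(\Delta,2)\frac k2+\frac k2-1$.

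To prove $f_k(G^{\ast})\ge V$ I would condition on the maximum degree $d=\Delta(H)$ of the surviving induced subgraph $H$ and show that forcing at least $k$ vertices of degree $d$ always costs at least $V$ deletions. A leaf of a star always has degree at most $1$ in any induced subgraph, so for $d\ge 2$ the $k$ required vertices of degree $d$ must all be star centres. Two easy cases come first. If $d\le 1$ then every star must lose at least one vertex (either its centre, or all but one of its $\ge 2$ leaves), so at least $V$ vertices are deleted. If $d>a_{s-1}$ then the only centres of degree $\ge d$ are the $k-1=2m-1$ centres of the big stars, fewer than $k$, so this whole range is infeasible.

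The heart of the argument is a level $d=a_j$ with $1\le j\le s-1$. Exactly $m$ centres already sit at degree $a_j$, and every centre of larger degree must be brought down to $\le a_j$, costing at least $1$ each. Since only $m$ vertices of degree $a_j$ are free but $k=2m$ are required, at least $m$ of the higher centres must be reduced all the way down to degree $a_j$; any such reduction costs at least the smallest gap, $a_{j+1}-a_j=j+1$. Writing $H_j=m(s-1-j)+(2m-1)$ for the number of centres of degree exceeding $a_j$, this gives
\[
\mbox{cost}(d=a_j)\ \ge\ m\,(j+1)+(H_j-m)\ =\ m(s+1)-1\ =\ V ,
\]
the final equality being precisely where the staircase identity $a_{j+1}-a_j=j+1$ is used (and at $j=s-1$ one only gains, since $\Delta-a_{s-1}\ge s$). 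A value of $d$ strictly between two levels has no free centre at degree $d$, so $2m$ centres must be fully reduced, which a short computation shows is strictly more expensive than the adjacent level. Collecting the cases yields $f_k(G^{\ast})\ge V$, hence $g(\Delta,k)\ge g(\Delta,2)\frac k2+\frac k2-1$.

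I expect the real work — and the only place a one-line bound will not suffice — to be the uniform treatment of all $d\ge 2$ in the last two cases: the multiplicities $m$ on the small stars and $k-1$ on the big star are chosen exactly so that the ``reduce-to-a-level'' cost and the ``one-deletion-per-star'' cost both equal $V$, and verifying that every intermediate and high value of $d$ is no cheaper relies on the gap bookkeeping above rather than on any single inequality.
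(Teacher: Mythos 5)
Your proposal is correct and follows essentially the same route as the paper: your $G^{\ast}$ is exactly the paper's construction ($\frac{k}{2}$ copies of each star $K_{1,a_j}$, $j=1,\ldots,s-1$, together with $k-1$ copies of $K_{1,\Delta}$), and the lower bound is obtained by the same level-by-level costing of the deletions needed to equate $k$ maximum degrees at each candidate value. The difference is only presentational: you condition formally on $d=\Delta(H)$ and give a uniform cost formula $m(j+1)+(H_j-m)=V$ valid at every level $a_j$, whereas the paper walks down the levels $a_{t-1},a_{t-2},\ldots$ informally and asserts that the remaining and intermediate cases ``continue'' to cost at least as much.
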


\begin{proof}

Recall the sequence $a_t = \binom{t + 1}{2}  +1$,  which for $t \geq 1$ gives the smallest maximum degree  for which there is a graph $G$ with $f(G)  =  t$.   Such a graph is $\bigcup K_{1,a_j }$ for $j = 1,\ldots,t$  and in  case we have $a_t \leq \Delta < a_{t+1}$,  $G = K_{1, \Delta} \cup K_{1,a_ j}: j = 1,\ldots,t-1$.
 
Now we take  $k-1$ copies of  $K_{1,a_t}$   and $\frac{k}{2}$ copies of $K_{1,a_j}: j = 1,\ldots,t-1$.  In case $a_t \leq \Delta < a_{t+1}$ we take $k-1$ copies of $K_{1,\Delta}$ and $\frac{k}{2}$  copies of $K_{1,a_j}:  j = 1,\ldots, t-1$.  
 
Note that for $k = 2$ this is exactly the sequence that realises Theorem \ref{theoremdelta}.
 
Observe now that we cannot equate  to degree $\Delta$ as there are just $k-1$ such degrees.  So we can equate to the second largest degree $a_{t-1}$ by deleting exactly $\Delta - a_{t-1}$  leaves from $\frac{k}{2}$ vertices of the maximum degree and $k -1 - \frac{k}{2}$ other centres.  Altogether we deleted
 
\[\frac{(\Delta - a_t)k}{2}  + \frac{k}{2}-1 \geq \frac{g(\Delta,2)k}{2}  + \frac{k}{2}  - 1.\]   In case $\Delta = a_t$ we have deleted exactly $\frac{g(\Delta,2)k}{2}  + \frac{k}{2}-1$.
 
We can now equate to some value $x$ such that  $a_{t-1}>   x  \geq  a_{t-2}+j$, $ j \geq 1$.    However clearly this requires the deletion of more vertices then just to equate to $a_{t-1}$  and in particular the deletion of at least    $g(\Delta,2) \frac{k}{2} + \frac{k}{2}- 1$  vertices. 

Now we can try to equate to $a_{t-2}$.
 
The cheapest way is to delete the $k-1$ vertices of degree $\Delta$ and $a_{t-1} -a_{t-2}$  leaves from each of the $\frac{k}{2}$ vertices of degree $a_{t-1}$.  So altogether we deleted\[k - 1 + (a_{t-1}- a_{t-2})\frac{k}{2}=   k-1 +  (g(\Delta,2) - 1)\frac{k}{2}  = g(\Delta,2)\frac{k}{2}  + \frac{k}{2} - 1\] vertices.

Again we can now try to equate to some value $x$ such that  $a_{t-2}>   x  \geq  a_{t-3}+j$, $ j \geq 1$..  However Clearly this requires the deletion of more vertices then just to equate to $a_{t-2}$ and in particular the deletion of at least    $g(\Delta,2) \frac{k}{2} + \frac{k}{2}- 1$  vertices. 

So this deletion process continues and we always forced to delete at least  $g(\Delta,2) \frac{k}{2} + \frac{k}{2}- 1$  vertices, even if we delete all the centres of the stars to get an induced subgraph with all degrees equal 0.

%The rest gives always a deletion of $g(\Delta,2)\frac{k}{2} + \frac{k}{2}  -1$ vertices   and at least this number even if we drop all %the centres to get only the original leaves as vertices of degree 0.
 
Hence for even $k \geq 2$  we get $g(\Delta,k) \geq g(\Delta,2)\frac{k}{2} + \frac{k}{2}  -1$ (which is sharp for $k=2$).
 
While slight improvements on this lower bound are possible for  odd $k \geq 3$,  our goal in this construction is only to demonstrate a linear lower bound  on $g(\Delta,k)$  in terms of $g(\Delta,2)$ and $k$ for which the construction suffices.

\end{proof}
We now turn our attention to $h(\Delta,k)$.  Recall that  for $ k \geq 2$, a graph $G$  is $k$-feasible if it contains an induced subgraph $H$ (possibly also $H = G$) such that in $H$ there are at least $k$ vertices that realise  $\Delta(H)$, and we define   \[h(\Delta,k)  = \max \{ |G| : \Delta(G) \leq \Delta \mbox{ and $G$  is not $k$-feasible}\}.\]  
 
\begin{theorem}
For every $\Delta \geq 0$ and $k \geq 2$,  
\begin{enumerate}
\item{$h(\Delta,k)  \leq R(k,k)  - 1$.}
 \item{$ h(0,k) = k-1$.}
\item{$h(1,k) =  \lfloor \frac{k}{2} \rfloor + 2 \lfloor \frac{k-1}{2} \rfloor$.}
\item{For odd $k \geq 3$, $h(2,k) = 2k-2$,   and for even $k \geq 2$,  $h(2,k) = 2k-3$.}
\item{$h(\Delta,k)  \leq g(\Delta,k)  + k-1$.}
\end{enumerate}
\end{theorem}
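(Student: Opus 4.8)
The plan is to treat the five parts roughly in order of difficulty, isolating one reduction and one structural inequality that do most of the work.

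Parts (i), (ii) and (v) are short, and I would prove (v) first since it powers the upper bounds. For (i), Ramsey's theorem (as noted in the introduction) forces any graph on $R(k,k)$ vertices to contain an induced $K_k$ or an induced independent set of size $k$; in either case those $k$ vertices all realise the maximum degree of that induced subgraph, so the graph is $k$-feasible regardless of $\Delta$, whence any non-$k$-feasible graph has at most $R(k,k)-1$ vertices. Part (ii) is immediate: with $\Delta=0$ a graph is $k$-feasible exactly when it has at least $k$ vertices. For (v) the key observation is that if $G$ is \emph{not} $k$-feasible then $f_k(G)=|G|-(k-1)$: since no induced subgraph ever exhibits $k$ vertices of its own maximum degree, the only way to satisfy the definition of $f_k$ is to delete down to at most $k-1$ vertices, and the cheapest such deletion removes exactly $|G|-(k-1)$ vertices. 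Hence $|G| = f_k(G)+k-1 \le g(\Delta,k)+k-1$, and maximising over non-$k$-feasible $G$ gives $h(\Delta,k)\le g(\Delta,k)+k-1$.

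The exact values (iii) and (iv) rest on a characterisation of $k$-feasibility for small maximum degree. Writing $n_2(G)$ for the number of degree-$2$ vertices, $\alpha(G)$ for the independence number, and $\mu(G)$ for the maximum size of an induced matching, I would prove that a graph with $\Delta(G)\le 2$ is $k$-feasible if and only if $n_2(G)\ge k$, or $\mu(G)\ge \lceil k/2\rceil$, or $\alpha(G)\ge k$. These three clauses correspond to realising maximum degree $2$, $1$, or $0$ in an induced subgraph; the degree-$1$ clause uses that an induced subgraph of maximum degree $1$ is a disjoint union of edges and isolated vertices, so its degree-$1$ vertices occur in exactly $\mu$-many pairs. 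Dropping the first clause gives the corresponding statement for $\Delta(G)\le 1$.

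For (iii) the upper bound is then immediate from (v) and the earlier value $g(1,k)=\lfloor (k-1)/2\rfloor$, since $\lfloor (k-1)/2\rfloor + (k-1)$ equals $\lfloor k/2\rfloor + 2\lfloor (k-1)/2\rfloor$ by the parity identity $\lfloor k/2\rfloor + \lfloor (k-1)/2\rfloor = k-1$; sharpness is witnessed by $\lfloor k/2\rfloor K_1 \cup \lfloor (k-1)/2\rfloor K_2$, which has $\mu < \lceil k/2\rceil$ and $\alpha = k-1 < k$ and so is non-$k$-feasible. For (iv) the negated characterisation says a non-$k$-feasible graph with $\Delta\le 2$ satisfies $n_2\le k-1$, $\mu\le\lfloor(k-1)/2\rfloor$ and $\alpha\le k-1$ at once. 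The engine is the per-component inequality
\[ \tfrac12 n_2(D) + \mu(D) + \alpha(D) \ge |V(D)|, \]
valid for every path and every cycle $D$ by a routine check against the standard formulas for $\alpha$ and $\mu$. Since all four quantities are additive over connected components, this inequality holds for every $G$ with $\Delta(G)\le 2$, and combining it with the three bounds gives $|V(G)|\le \tfrac12(k-1)+\lfloor(k-1)/2\rfloor+(k-1)$, which is $2k-2$ for odd $k$ and, after taking the integer part of a half-integer, $2k-3$ for even $k$. Sharpness follows from $\tfrac{k-1}{2}P_4$ (odd $k$) and $\tfrac{k-2}{2}P_4\cup K_1$ (even $k$), each saturating the three constraints.

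I expect the main obstacle to be the even-$k$ case of (iv). The clean general bound (v), fed with $g(2,k)=k-1$, only yields $h(2,k)\le 2k-2$, so extracting the extra vertex and reaching $2k-3$ forces the sharper, parity-sensitive counting built into the coefficient $\tfrac12$ of the master inequality rather than the one-line deduction that settles odd $k$ and all of (iii). A secondary technical point is verifying the per-component inequality uniformly across all path and cycle lengths, with attention to the small components $K_1,K_2,P_3,P_4,C_3,C_4,C_5$ that actually appear in the extremal graphs.
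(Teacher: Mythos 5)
Your proposal is correct, and on the hard part (iv) it takes a genuinely different route from the paper. Parts (i), (ii) and (v) coincide with the paper's arguments (the paper phrases (v) contrapositively: a graph on $g(\Delta,k)+k$ vertices cannot be brought below $k$ vertices by deleting $g(\Delta,k)$ of them, hence is $k$-feasible). For (iii) the paper argues the upper bound directly, writing $n=x+2y$ and deleting one endpoint of each isolated edge, whereas you derive it from (v) together with $g(1,k)=\lfloor(k-1)/2\rfloor$ and the identity $\lfloor k/2\rfloor+\lfloor(k-1)/2\rfloor=k-1$; both are valid and use the same extremal graph. The real divergence is (iv): the paper gets $h(2,k)\le 2k-2$ from $g(2,k)=k-1$, and for even $k$ proves that every graph on $2k-2$ vertices with $\Delta=2$ is $k$-feasible via a lengthy case analysis (splitting the components into isolated vertices and edges, copies of $K_{1,2}$, and the rest, then applying Ore's bound $\gamma\le\lfloor n/2\rfloor$ to the subgraph induced by the degree-$2$ vertices). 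You replace all of this by (a) the characterisation that $G$ with $\Delta(G)\le 2$ is $k$-feasible iff $n_2\ge k$ or $\mu\ge\lceil k/2\rceil$ or $\alpha\ge k$ --- correct, since a witnessing induced subgraph has maximum degree $2$, $1$ or $0$, yielding respectively $k$ degree-$2$ vertices of $G$, an induced matching of size at least $\lceil k/2\rceil$, or an independent set of size at least $k$, and conversely --- and (b) the component-wise inequality $\tfrac12 n_2+\mu+\alpha\ge|V|$, which indeed holds for every path and cycle (it is tight exactly for $K_1$, $K_2$ and $P_4$, which explains why the extremal graphs are disjoint unions of these) and is additive over components. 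Feeding in $n_2\le k-1$, $\mu\le\lfloor(k-1)/2\rfloor$, $\alpha\le k-1$ gives $2k-2$ for odd $k$ and, by integrality of $|V|$, $2k-3$ for even $k$, with the same extremal graphs as the paper; incidentally your $\tfrac{k-2}{2}P_4\cup K_1$ corrects the paper's miswritten $\tfrac{k-1}{2}P_4\cup K_1$ (whose vertex count the paper nevertheless computes as $2k-3$). What your route buys is uniformity: one inequality settles both parities at once, with no appeal to Ore's theorem, to $h(2,k-1)$, or to the theorem $g(2,k)=k-1$; what the paper's route buys is an explicit deletion procedure and reuse of machinery already set up for $g(2,k)$. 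The only step in your sketch that needs careful writing is the verification of the master inequality across all path and cycle lengths, using $\mu(P_n)=\lceil(n-1)/3\rceil$, $\mu(C_n)=\lfloor n/3\rfloor$, $\alpha(P_n)=\lceil n/2\rceil$, $\alpha(C_n)=\lfloor n/2\rfloor$, together with the small cases you rightly flag; I have checked these and the inequality holds throughout.
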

 \begin{proof}
\begin{enumerate}
\item{Clearly if $|G| \geq R(k,k)$  then $G$ has  a vertex-set $A$,  $|A| \geq k$  such that the induced subgraph on $A$  is  either a clique or an independent set .
 
Hence deleting $V - A$  we are left with a regular graph on at least $k$ vertices hence $G$ is $k$-feasible and $h(\Delta,k) \leq R(k,k)- 1$.}
\item{$h(0,k) = k-1$ is trivially realised by $( k-1) K_1$ i.e. $k-1$ isolated vertices.}
\item{A lower bound for $h(1,k)$  is   $h(1,k)  \geq \lfloor\frac{k}{2} \rfloor + 2 \lfloor \frac{k-1}{2} \rfloor$  realised by the graph $G =  \lfloor \frac{k}{2} \rfloor K_1  \cup \lfloor \frac{k-1}{2} \rfloor K_2$  which is trivially seen to be non-$k$-feasible.
 
Next suppose $G$ is a graph having  $\lfloor\frac{k}{2} \rfloor + 2 \lfloor \frac{k-1}{2} \rfloor  + j$  vertices, $j \geq 1$.
 
Write  $\lfloor\frac{k}{2} \rfloor + 2 \lfloor \frac{k-1}{2} \rfloor  + j  =   x  +2y$   where $x$ denotes the number of $0$-vertices  and $2$y the number of $1$-vertices in $G$.   
 
Now if $y >  \lfloor \frac{k-1}{2} \rfloor$ we have at least $k$ vertices of degree 1 and we are done.  If  $0 \leq y \leq  \lfloor \frac{k-1}{2} \rfloor$ then delete $y$ $1$-vertices, one of each  copy of $K_2$,  and we are left with at least  $\lfloor\frac{k}{2} \rfloor +  \lfloor \frac{k-1}{2} \rfloor +  j \geq  \lfloor\frac{k}{2} \rfloor + \lfloor \frac{k-1}{2} \rfloor  +  1  = k$ vertices of degree  0.
 
Hence $G$ is $k$-feasible, and  $h(1,k) =  \lfloor\frac{k}{2} \rfloor + 2 \lfloor \frac{k-1}{2} \rfloor$.}
\item{Clearly $h(2,k)  \leq 2k - 2$  since if  $G$ has $\Delta = 2$  and at least $2k- 1$ vertices  then by deleting at most $k- 1 =  g(2,k)$ vertices we cannot get below $k$ so there must be induced $H$  with at least $k$  vertices realizing the maximum degree. 
 
Suppose $k$  \textbf{is odd}  and $k \geq 3$.  Consider the graph $G =   \frac{k-1}{2}P_4$  ($\frac{k-1}{2}$ copies of the path on four vertices $P_ 4$).  Clearly $|G| = 2k-2$ having exactly $k-1$ $2$-vertices and $k-1$  $1$-vertices.
 
Observe that  if $G$ is $k$-feasible then in at least one of the $P_4$  we should be able to delete just one vertex to get the remaining three vertices of the same degree, otherwise if in each  copy of $P_4$ (or what remains of it after deleting some vertices) we will  have at most two vertices of the same degree then over all $G$ we will have at most $k-1$ vertices of the same degree,  meaning $G$  is not $k$-feasible.
 
However it is impossible to delete one vertex from $P_4$ to get all the remaining three vertices of the same degree hence $G$ is not $k$-feasible proving $h(2,k) = 2k-2$ for odd $k \geq 3$.

Suppose $k$ \textbf{is even},  $k \geq 2$.
 
The case $k = 2$ is trivial hence we assume$k \geq 4$.
 
Consider the graph $G  =  \frac{k-1}{2}P_4  \cup  K_1$.  Clearly $|G| = 4\frac{k-2}{2} +1 = 2k-3$ having exactly  $k-2$  $1$-vertices,  $k-2$  $2$-vertices and one  $0$-vertex.   If $G$ was $k$-feasible then by deleting the $0$-vertex $v$, $H = G-v$  would be at least  $k-1$-feasible with odd  $t = k -1 \geq 3$.
 
But $H$ is exactly the graph which was proved above to be non $t$-feasible for odd $t \geq 3$,  so  $G$ is not $k$-feasible, proving $h(2,k)  \geq 2k-3$ for even $k \geq 4$.
 
We have to  show that if $|G| = 2k-2$ and $\Delta(G)=2$,  then for even $k \geq 4$, $G$ is $k$-feasible,  this will complete the proof that for even $k \geq 2$,  $h(2,k) = 2k-3$.

Suppose on the contrary that  $|G| = 2k-2$  and $\Delta(G) = 2$ but $G$ is not $k$-feasible.  Let $n_j$, $j = 0,1,2$  be the number of vertices of degree $j = 0,1,2$  respectively in $G$.  
 
Since $G$ is non-$k$-feasible  and by the value of $h(1,k)$  we may assume $1 \leq n_2  \leq  k-1$.   However  $2k  - 2 > h(2,k-1) = 2(k-1) -2 = 2k-4$. Hence $G$ is $k-1$-feasible.
 
So either $n_2 = k-1$  or else, by removing  at most  $k -2$  vertices, we get  an induced subgraph $H$,  $|H| > = k$ with at least $k-1$ vertices realising the maximum degree of $H$.
 
If $\Delta( H) = 1$  then, since $k - 1$ is odd, it forces that there are  at least $k$ $1$-vertices  but then $G$ is $k$-feasible.  Otherwise $\Delta(H) = 0$ but $|H| \geq k$ and again $G$ is $k$-feasible. 
 
 So   only the case $n_2 = k-1$  is left.  Since $n=2k-2$ and $n_2=k-1$ then by parity $n_1 \leq k-2$  and $ n_0 \geq 1$.
 
We collect  the (possible)  components of $G$  into three  subgraphs:  $A = \{$all isolated vertices  and isolated edges$\}$,  $B = \{$all copies of $K_{1,2}\}$,  $C  =\{$all other components$\}$.
 
We denote by $t$ the number of copies of $K_{1,2}$  in $B$  and also observe that   $t < n_2 = k-1$ since otherwise $|G| = 3k-3 > 2k-2=|G|$  a contradiction since $k \geq 2$.  
 
Also observe  that in each component in $C$ the vertices of degree 2 induced either on  a path (including a single edge)  or a cycle.
 
Claim: If $t > \lfloor \frac{k-1}{2} \rfloor$ we are done.
 
This is because $F$ is not empty since $t < n_2$,  and by the observation above $\delta(F) \geq 1$  hence by Ore's result  the domination number of $F$ satisfies $\gamma(F) \leq \lfloor \frac{n_2 - t }{2 } \rfloor \leq  \lfloor \frac{k-1-t }{2 } \rfloor$.
 
Let $D$ be a minimum dominating set for $F$. Deleting $D$  from $C$  and from each copy  of $K_{1,2}$  in $B$ we delete a leaf  to get  an induced subgraph $H$ with $\Delta(H) = 1$ and with at least $2 (\lfloor \frac{k-1}{2 } \rfloor +1) \geq k$ vertices of degree 1, meaning $G$ is $k$-feasible.  Observe we have deleted at most \[t + \lfloor \frac{n_2 - t }{2 } \rfloor \leq \frac{n_2+t }{2 } \leq \lfloor \frac{2n_2 -1 }{2 } \rfloor = \lfloor \frac{2k-3}{2} \rfloor = k-2\] vertices, proving the claim.

Consider the subgraph $F$ induced by the vertices of degree 2 in $C$ and recall $ |F| > 0$,  hence $|F| \geq 2$.   
 
Then as we have noted before, due to the components of $C$,  there is no isolated vertices in $F$,  and by  Ore's result $\gamma(F) \leq \lfloor \frac{n_2 - t }{2 } \rfloor \leq  \lfloor \frac{k -1 - t}{2 } \rfloor$.
 
Let $D$ be a dominating set for $D$ that realises $\gamma(F)$, hence $|D|\leq \frac{ n_2 - t}{2}$.
 
Delete $D$ and consider the induced subgraph $H$ on  $A \cup C$.   Clearly $\Delta(H) \leq 1$ and denote by $x(1)$ the number of vertices of degree 1 in $H$.
 
Now let us look again at $B$.  
 
\noindent Case  1:  $t  = 0$.
 
Since $t = 0$, $B$ is empty, and we have deleted $|D| \leq \lfloor \frac{n_2}{2} \rfloor \leq  \lfloor \frac{ k-1}{2 } \rfloor = \frac{k-2}{2}$ vertices since $k$ is even.  So the number of vertices remains is  at least $2k-2  - \frac{k-2}{2}  =  \frac{3k-2}{2}$.
 
But as $k$ is even, $ h(1,k)  =   \lfloor \frac{k}{2 } \rfloor + 2\lfloor \frac{k-1}{2} \rfloor= \frac{ k}{2} + \frac{2(k-2)}{2}  =  \frac{3k - 4}{2}  < \frac{3k-2}{2}$ hence $H$ is $k$-feasible  and so $G$ is $k$-feasible.

\noindent Case  2:  $1 \leq  t \leq \lfloor\frac{k-1}{2 } \rfloor$.

We consider two cases:
\begin{enumerate}
\item{if $x(1) \geq  k - 2t$  then deleting a leaf  from every copy of $K_{1,2}$ in $B$  we get  an induced graph  $H^*$ on  $A \cup B \cup C$  (extending $H$ to  the leftover of $B$) with $\Delta(H^*) = 1$ and at least $k- 2t +2t = k$ vertices of degree 1 and we are done as we have deleted altogether $|D| + t \leq \frac{n_2 - t }{2} +t  = \frac{n_2 +t}{2}  \leq k-2$  (as before).  Hence $G$ is $k$-feasible.  }
\item{if $x(1) \leq k - 1 -2t$  (recall  $x(1)$  is the number of vertices of degree 1 in $H$ formed from $A  \cup  \{ C \backslash  D \}$), then by the even parity of $x(1)$ and as k is even we must have $x(1) \leq k-2-2t$.
 
Now  delete $\frac{x(1)}{2}$ independent  vertices of degree 1 in  $H$,  and $t$ vertices of degree 2 in $B$ to get an induced subgraph $H^*$  with $\Delta(H^*) = 0$.
 
We have removed  \[\frac{x(1)}{2} + |D| + t \leq  \frac{k-2-2t}{2} + \frac{k-1-t}{2} + t = \frac{2k - 3 -t }{2} \leq \frac{2k-4}{2 }= k-2\] vertices (since $t \geq 1$),  hence $|H^*| \geq k$ and we have $k$ vertices of degree 0  realizing $\Delta(H^*)$. Hence $H^*$ is $k$-feasible and so does $G$,  completing the proof.}
 
 \end{enumerate}}
 
\item{ Suppose $|G|= g(\Delta,k) +k$ and  $\Delta(G) = \Delta$.  Then by the definition of $g(\Delta,k)$, by deleting at most $g(\Delta,k)$ vertices we either get below $k$ vertices or have an induced subgraph $H$ with at least $k$ vertices realizing the maximum degree of $H$.
 
But deleting $g(\Delta,k)$  vertices from $G$ will leave us with a graph on at least $k$  vertices hence  the second possibility above holds and $G$ is $k$-feasible, and we conclude that $h(\Delta,k)  \leq  g(\Delta,k) + k-1$.}
\end{enumerate}
\end{proof}

\section{Open Problems}

We conclude by  proposing the following open problems:
\begin{enumerate}
\item{Certainly the most intriguing problem is to solve the Caro-Yuster conjecture that $f(n,k) \leq f(k) \sqrt{n}$.  As mentioned we proved that $ f(2) = \sqrt{2}$  is sharp and best possible, and it is known that $f(3)  \leq 43$. For $k \geq 4$ the conjecture remains open.  Even a proof that $f(n,k) =o(n)$ is of interest.}
\item{Theorem \ref{theorem_1} supplies an $O(n^2)$  algorithm to compute $f(G)$. Can $f_3(G)
$  be computed in polynomial time?}
\item{We have calculated, in section 4  , the exact values of  $g(\Delta,k)$ for $\Delta= 0,1,2$, and we have given a general  constructive lower bound for $g(\Delta,k)$.  Determining $g(3,k)$  seems a considerably more involved task, as well as proving a conjecture inspired by the Caro-Yuster conjecture namely:

\begin{conjecture}
For $k \geq 2$ there is a constant $g(k)$  such that $g(\Delta,k)  \leq g(k) \sqrt{\Delta}$.  
\end{conjecture}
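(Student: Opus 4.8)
The plan is to prove the stronger quantitative statement that $g(\Delta,k) \le C\,k\sqrt{\Delta}$ for an absolute constant $C$, which already exhibits a constant $g(k)=Ck$ and matches, up to the value of $C$, the linear-in-$k$ lower bound $g(\Delta,k) \ge g(\Delta,2)\tfrac{k}{2} + \tfrac{k}{2} - 1$ proved above together with $g(\Delta,2)=\Theta(\sqrt{\Delta})$. The crux is to replace the crude estimate $g(\Delta,k)\le (k-1)\Delta$ of Lemma~\ref{lemma_tree_1}, which wastes a whole factor of $\Delta$ by peeling off only one degree level per round, with a \emph{banded peeling} that lowers the maximum degree by $w:=\lceil\sqrt{\Delta}\rceil$ per round.

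First I would partition the degree range $(0,\Delta]$ into consecutive windows of width $w$ and process from the top down. Given the current graph, let $W$ be the set of vertices whose current degree lies in the top window. If $|W|\le k-1$, delete all of $W$; every surviving vertex then has current degree at most $\Delta_{\mathrm{cur}}-w$, so one round lowers the maximum degree by at least $w$. Hence after at most $\lceil\Delta/w\rceil\le\sqrt{\Delta}+1$ rounds, at a cost of at most $(k-1)(\sqrt{\Delta}+1)$ deletions, we either exhaust the vertices (so $|H|<k$ and we are done by definition) or are left with at least $k$ vertices of degree $0$ realising $\Delta(H)=0$. The only remaining way the process halts is that at some round the top window $W$ has $|W|\ge k$; at that moment the $k$ largest-degree vertices $v_1,\dots,v_k$ all have degrees in a band $(b,b+w]$ while every other vertex has degree at most $b$.

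The second, decisive step is an \emph{equating lemma}: from such a configuration one can delete at most $(k-1)w+O(k^2)$ further vertices to obtain $k$ vertices simultaneously realising the maximum degree. I would keep the $k$ largest vertices $v_1,\dots,v_k$ (the surplus top vertices are already at most $\deg(v_k)$, hence harmless), set the target level to $\deg(v_k)$, and reduce $v_1,\dots,v_{k-1}$ to it by repeatedly deleting a neighbour of an ``unfinished'' $v_i$ (current degree $>\deg(v_k)$) incident to no ``finished'' vertex (current degree exactly $\deg(v_k)$); each such deletion lowers the total surplus $\sum_{i<k}(\deg(v_i)-\deg(v_k))\le (k-1)w$ by at least one without pushing a kept vertex below the target. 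Combining the two steps gives $g(\Delta,k)\le (k-1)(\sqrt{\Delta}+1)+(k-1)w+O(k^2)=O(k\sqrt{\Delta})$.

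The main obstacle is precisely the feasibility of this greedy. A counting argument shows a safe neighbour exists while the current gap $\deg(v_i)-\deg(v_k)$ exceeds $k-1$, which clears all but a residual surplus of at most $(k-1)^2$ confined to a band of width $k-1$ above the target; the hard case is the residual ``twin'' situation, where $v_i$ shares almost its entire neighbourhood with already-finished vertices and every available deletion threatens to overshoot one of them below the target. I expect to dispose of the residual either by a Hall/Gale--Ryser feasibility argument for the induced bipartite degree-prescription, or by re-targeting the $k$ kept vertices to a slightly lower common level inside the width-$(k-1)$ band, in either case at an additional cost of $O(k^2)$ that leaves the $\Theta(\sqrt{\Delta})$ growth untouched. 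Optimising the window width (using widths that increase toward the bottom, balancing the peeling cost against the equating cost) would sharpen $C$ toward the value $\tfrac{1}{\sqrt2}$ suggested by the lower bound, but the uniform-width version already settles the conjecture.
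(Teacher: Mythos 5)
This statement is one of the paper's \emph{open conjectures} (Section 5) --- the paper offers no proof, and it explicitly notes that the conjecture implies the Caro--Yuster conjecture, which is open for $k \geq 4$ and required a substantial argument even to get $c(3) \leq 43$. So your proposal, if correct, would settle a known open problem; it does not, because the step you yourself flag as ``the main obstacle'' --- the equating lemma --- is not proved, and both patches you offer for it fail in general graphs. The banded peeling itself is sound (it is essentially the windowing in the proof of Theorem \ref{forests}, and the budget $(k-1)(\sqrt{\Delta}+1)$ is correct), but the counting claim that ``a safe neighbour exists while the gap $\deg(v_i)-\deg(v_k)$ exceeds $k-1$'' is false for $k \geq 3$: the at-target kept vertices have degree exactly $T$, so the union of their neighbourhoods can have size up to $(k-1)T$, which comfortably contains the entire neighbourhood of $v_i$ (of size $T+g_i$) even when the gap $g_i$ is as large as the whole band width $w \approx \sqrt{\Delta}$. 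Concretely, for $k=3$ take $\deg(v_2)=\deg(v_3)=T$, $\deg(v_1)=T+g$ with $g \leq T$, and $N(v_1) \subseteq N(v_2) \cup N(v_3)$: no safe deletion exists although $g \gg k-1$. Hence the residual surplus is \emph{not} confined to a width-$(k-1)$ band, the $O(k^2)$ re-targeting budget has no basis, and re-targeting can cascade (each lowering of the target re-creates surplus at the other kept vertices). The Gale--Ryser suggestion also does not apply: a deletion removes a vertex together with \emph{all} its incidences to the kept set simultaneously, so realizing the surplus vector is an exact covering problem over the incidence patterns of candidate vertices, not a bipartite degree-sequence realization, and it is generally infeasible without lowering the target by an uncontrolled amount.

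It is instructive that the one setting where your scheme does go through is exactly the one the paper treats: for forests, Lemma \ref{lemma_tree_2} gives $|M(A)| < k$, so each kept vertex $v_i$ has at least $\deg(v_i) - 2k + 1$ \emph{private} neighbours whose deletion affects no other kept vertex --- precisely the supply of safe deletions your greedy needs, and precisely what a general graph can lack. This is why Theorem \ref{forests} (and its extension to graphs without even cycles) is confined to sparse, cycle-restricted classes, and why only the crude bound $f_k(G) \leq (k-1)\Delta$ of Lemma \ref{lemma_tree_1} is available in general. Your argument, made rigorous, reproves a forest-type bound; for general graphs the equating step is the whole content of the conjecture and remains open.
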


This conjecture, if true,  implies the Caro-Yuster conjecture.    }
\item{We introduced the notion of a $k$-feasible graph and the corresponding function $h(\Delta,k)$ discussed in Section 4.  We have determined the exact values of $h(\Delta,k)$  for $\Delta = 0,1,2$.  We pose the problem to determine more exact values of $h(\Delta,k)$  in particular for $\Delta = 3$ as well as to determine $h(k) = \max\{ h(\Delta,k) :   \Delta \geq 0\}$.  Clearly as already proved in section 4,  $h(k)  \leq R(k,k) -1$.}
\item{Lastly we mention again the conjecture about forests:
 \begin{conjecture}
If  $F$  is a forest on $n$ vertices, where   $n \leq \frac{t^3 + 6t^2 +17t +12}{6}$  then   $f(F) \leq t$  and this bound is sharp.
\end{conjecture}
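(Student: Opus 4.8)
The plan is to establish the non-trivial direction of the conjecture, namely that every forest $F$ on $n \leq \frac{t^3+6t^2+17t+12}{6} = b_t - 1$ vertices satisfies $f(F) \leq t$, by contraposition: I assume $f(F) \geq t+1$ and deduce $n \geq b_t$. Sharpness is already supplied by the trees $T_t$ constructed above, which have exactly $b_t$ vertices and $f(T_t) = t+1$, so only this implication needs proof.

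The engine is Theorem \ref{theorem_1}, which reads off $f(F)$ from the greedy peeling sequence $G_0 = F, G_1, G_2, \ldots$, where $G_{j+1}$ is obtained by deleting the maximum-degree vertex $w_j$ of $G_j$. Writing $D_j = \Delta(G_j) = \deg_{G_j}(w_j)$, the hypothesis $f(F) = \min_j(\diff(G_j)+j) \geq t+1$ forces, for every $j = 0,1,\ldots,t$, both that $G_j$ has a \emph{unique} maximum-degree vertex (otherwise $\diff(G_j)=0$ and the minimum would already be $\leq j \leq t$) and that $\diff(G_j) \geq t+1-j$. Thus the peeling genuinely runs for at least $t+1$ steps, and each step exhibits a large gap between the two largest degrees.

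From here I combine two counting ingredients. First, the gap bound $\diff(G_j)\geq t+1-j$ together with the fact that deleting a vertex cannot raise any degree gives $D_{j+1} = \Delta(G_{j+1}) \leq d_2(G_j) = D_j - \diff(G_j)$, hence the recursion $D_{j+1} \leq D_j - (t+1-j)$, which telescopes to $D_j \geq (D_t-1) + \binom{t+2-j}{2}$. Second, the set of edges incident to $w_j$ at its deletion time lies in $G_j \subseteq F$ and is destroyed by that deletion, so these edge sets are pairwise disjoint subsets of $E(F)$; since $F$ is a forest this yields $n-1 \geq |E(F)| \geq \sum_{j=0}^t D_j$.

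The main obstacle is that these two ingredients, using only the trivial bound $D_t \geq 1$, give $\sum_{j=0}^t D_j \geq \binom{t+3}{3}$ and hence merely $n \geq b_t - (t+1)$ — short of the target $b_t$ by exactly $t+1$. The decisive observation that closes this gap is that in fact $D_t \geq 2$: since $\diff(G_t) \geq 1$, the graph $G_t$ has a unique vertex of maximum degree, which is impossible when $\Delta(G_t) \leq 1$ (a graph of maximum degree $0$ has all degrees tied, and one of maximum degree $1$ is a nonempty matching, whose single edge already supplies two tied maxima). With $D_t \geq 2$ the telescoped bound sharpens to $D_j \geq \binom{t+2-j}{2}+1 = a_{t+1-j}$, so $\sum_{j=0}^t D_j \geq \sum_{i=1}^{t+1} a_i = \sum_{i=1}^{t+1}\bigl(\binom{i+1}{2}+1\bigr) = \binom{t+3}{3} + (t+1) = b_t - 1$, whence $n \geq b_t$ as required. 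I expect the only delicate points to be the bookkeeping confirming that the peeling is well-defined for all $t+1$ steps and the routine verification of the identity $\sum_{i=1}^{t+1}\bigl(\binom{i+1}{2}+1\bigr)=b_t-1$.
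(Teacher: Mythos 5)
Your proof is correct, and it is worth stating plainly that it does more than the paper does: the statement appears there only as an open conjecture (Conjecture \ref{con_trees}), and the paper's sole supporting evidence is the sharpness construction $T_t$ (a tree on $b_t=\frac{t^3+6t^2+17t+18}{6}$ vertices with $f(T_t)=t+1$) together with an ad hoc case analysis settling the single instance $t=2$, i.e.\ forests on at most $13$ vertices. I checked your steps against the machinery the paper does supply. Theorem \ref{theorem_1} does force $\diff(G_j)\geq t+1-j$ for $j=0,\ldots,t$ once $f(F)\geq t+1$, and the bookkeeping you flag is harmless: since any two surviving vertices have equal degrees, $f(G)\leq n-2$ for every graph on $n\geq 2$ vertices, so $f(F)\geq t+1$ forces $n\geq t+3$, and each $G_j$ with $j\leq t$ has at least three vertices and a unique maximum-degree vertex $w_j$. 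The recursion $D_{j+1}\leq D_j-\diff(G_j)$ holds because deletion cannot raise degrees; the stars of the $w_j$ at their deletion times are pairwise edge-disjoint (every edge counted at step $j$ contains $w_j$, later ones avoid it); $D_t\geq 2$ holds exactly as you argue; and the arithmetic $\binom{t+3}{3}+(t+1)=\frac{t^3+6t^2+17t+12}{6}=b_t-1$ is right, closing the chain $n-1\geq |E(F)|\geq \sum_{j=0}^{t}D_j\geq b_t-1$. Beyond settling the conjecture, your route buys two things: (i) it subsumes the paper's $13$-vertex proposition as the case $t=2$, since your forced chain $D_2\geq 2$, $D_1\geq 4$, $D_0\geq 7$ is precisely the degree pattern ($d_2\geq 4$, $d_1\geq 7$) that the case analysis there extracts by hand; (ii) the forest hypothesis enters only through $|E(F)|\leq n-1$, so you have in fact proved the stronger statement that every graph $G$ with $f(G)\geq t+1$ has at least $\binom{t+3}{3}+t+1$ edges, which, combined with $e(G)\leq \frac{3n}{2}$ for graphs without even cycles, yields a cubic-in-$t$ threshold for that wider class as well, in the spirit of the remark following Theorem \ref{forests}. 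Sharpness is correctly delegated to $T_t$, whose edge count $b_t-1$ shows that every inequality in your chain is tight.
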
}
\end{enumerate}

\bibliographystyle{plain}
\bibliography{2maxbibnew5}
\end{document}